\newcommand{\R}{\mathbb{R}}
\newcommand{\C}{\mathbb{C}}
\newcommand{\N}{\mathbb{N}}
\newtheorem{thm}{Theora}[section]
\newtheorem{Theo}[thm]{Theorem}
\newtheorem{Cor}[thm]{Corollary}
\newtheorem{Lem}[thm]{Lemma}
\newtheorem{Prop}[thm]{Proposition}
\newtheorem{Rem}[thm]{Remark}
\newenvironment{theorem*}[1]{\smallskip\noindent{\bf #1.}\rm}{\medskip}
\newenvironment{Proof}{\smallskip\noindent{\bf Proof.}\rm}
{\hfill $\Box$\medskip}
\newenvironment{proof}{\smallskip\noindent{\bf Proof}\rm}
{\hfill $\Box$\medskip}
\numberwithin{equation}{section}
\renewcommand\({\left(}
\renewcommand\){\right)}
\renewcommand\[{\left[}
\renewcommand\]{\right]}
\newcommand\Dim{{\rm{dim}}}
\newcommand\la{\lambda}
\newcommand\ga{\gamma}
\newcommand\rh{\rho}
\newcommand\si{\sigma}
\newcommand{\be}{\begin{equation}}
\newcommand{\ee}{\end{equation}}
\newcommand{\ba}{\begin{array}}
\newcommand{\ea}{\end{array}}
\newcommand{\bea}{\begin{eqnarray*}}
\newcommand{\eea}{\end{eqnarray*}}
\newcommand{\bean}{\begin{eqnarray}}
\newcommand{\eean}{\end{eqnarray}}
\newcommand\se{\sigma}
\makeatletter \@addtoreset{equation}{section}
\begin{document}
\title{Exact Boundary Controllability for the
Boussinesq Equation with Variable Coefficients}

\author{Jamel Ben Amara \thanks{Department of Mathematics, Faculty of Sciences of Tunis,
 University of Tunis el Manar, Mathematical Engineering Laboratory, Tunisia; e-mail: {\url{
jamel.benamara@fsb.rnu.tn}}.}~~~~~~~~~~Hedi Bouzidi
\thanks{Department of Mathematics, Faculty of Sciences of Tunis,
 University of Tunis el Manar, Mathematical Engineering Laboratory, Tunisia; e-mail:
{\url{bouzidihedi@yahoo.fr}}.}}
\date{}
 \maketitle {\bf Abstract:} In this paper we study the
exact boundary controllability for the following Boussinesq equation
with variable physical parameters:  \bea\left\{
 \begin{array}{lll}
\rho(x)y_{tt}=-(\sigma(x)y_{xx})_{xx}+(q(x)y_x)_x-(y^2)_{xx},&&t>0,~x\in(0,l),\\
y(t,0)=\si(l)y_{xx}(t,0)=y(t,l)=0,~~\si(l)y_{xx}(t,l)=u(t)&&t>0,\\
 \end{array}
 \right.
 \eea
where $l>0$, the coefficients $\rho(x)>0,\si(x)>0 $, $q(x)\geq0$ in
$\[0,l\]$ and $u$ is the control acting at the end $x=l$. We prove
that the linearized problem is exactly controllable in any time
$T>0$. Our approach is essentially based on a detailed spectral
analysis together with the moment method. Furthermore, we establish
the local exact controllability for the nonlinear problem
by fixed point argument.\\

{\bf Keywords.} Boussinesq equation, Fourth order linear
differential equations, Nonhomogeneous,
Boundary control, Fourier series.\\

{\bf AMS subject classification.} 93B05, 93B07, 93B12, 93B60.

\section{Introduction}
\indent  Let $T>0$ and $l >0$. The classical Boussinesq equation on
the bounded domain $\(0,l\)$ is of the form \be\label{crepeau1}
y_{tt}+ \sigma y_{xxxx}- y_{xx}+\(y^{2}\)_{xx}=0,~~~~(t,x)\in (0, T
)\times(0, l), \ee where the coefficient $\sigma\in\R$. This
equation was derived by the French mathematician Joseph Boussinesq
\cite{B} in 1872 as a model for the propagation of small amplitude
of long waves on the surface of water. This was the first to give a
scientific explanation of the existence of solitary waves found by
Scott Russell's in the 1840's. Depending on whether the coefficient
$\sigma$ in \eqref{crepeau1} is positive or negative, Equation
\eqref{crepeau1} nowadays known as the "good" or the "bad"
Boussinesq equation. In fact, the "bad" Boussinesq equation is
linearly unstable and admits the inverse scattering approach
\cite{DTT,Z}. For this reason, we only consider the version of the
"good" Boussinesq equation with variable coefficients. These
equations arise as a model of nonlinear vibrations along a beam
\cite{Z},
and also for describing electromagnetic waves in nonlinear dielectric materials \cite{T}.\\
\indent From a mathematical point of view, well-posedness and
dynamic properties of "good" Boussinesq equations have a huge
literature, see the paper by Bona and Sachs \cite{BS}, see also
\cite{A, L} and references therein.\\
\indent We are concerned with the exact boundary controllability for
the "good" Boussinesq equation with variable coefficients. In this
direction, the case of the linear "good" Boussinesq equation with
constant coefficients has been investigated by Lions in \cite{J.L2}.
In that reference, by Hilbert Uniqueness Method "Lions'{\rm HUM}"
(cf. Lions \cite{J.L1, J.L2}), it was proved that the linearized
"good" Boussinesq system
\begin{empheq}[left=\empheqlbrace]{align}
&y_{tt}=- y_{xxxx}+y_{xx},~~~~~~~~~~~~~~~~~(t,x)\in(0, T )\times(0, l),\label{crepeau2}\\
&y(t, 0)=y_{xx}(t, 0)=0,~~~~~~~~~~~~~~t\in(0,T),\label{crepeau3}\\
&y(t, l) =\tilde{u}(t),~y_{xx}(t, l) =u(t),~~~~t\in(0,T),\label{crepeau4}\\
&y(0, x) = y^{0},~~y_{t}(0, x) =
y^{1},~~~~~~~x\in(0,l),\label{crepeau5}
\end{empheq}
 is exactly controllable in any time
$T>2(l+\frac{1}{\sqrt{\la_0}})$, where the two controls $\(\tilde
u,~u\)\in L^2(0,T)\times H^{1}(0,T)$ and $\la_0$ denotes the first
eigenvalue of the operator $-\Delta$ with the Dirichlet boundary
conditions. Later on, Zhang \cite{ZH} studied the question of
distributed control for the generalized "good" Boussinesq equation
with constant coefficients on a periodic domain. A few years after,
Cr\'{e}peau extended in \cite{C} the results obtained in
\cite{J.L2}. More precisely, by a detailed spectral analysis and the
use of nonharmonic Fourier series, it was shown that System
\eqref{crepeau2}-\eqref{crepeau5} (for $\tilde u\equiv0$) is exactly
controllable at any time $T>0$, where the control $u\in L^2(0,T)$.
Furthermore, with the help of the fixed point theorem, it was also
proved the local controllability for the nonlinear control problem
\eqref{crepeau1}, \eqref{crepeau3} and \eqref{crepeau4} (for
$\si\equiv1$ and $\tilde u\equiv0$). Concerning the control of the
approached "bad" Boussinesq equation, the controllability properties
for the so called improved Boussinesq equation have been
obtained recently by Cerpa and Cr\'{e}peau \cite{CK*}.\\
\indent In the previous studies of controllability, the coefficients
of the "good" Boussinesq equation are supposed to be constant. In
the present paper, we address the problem of exact boundary
controllability for the "good" Boussinesq equation with variable
coefficients. More precisely, we consider the following control
problem \bean \label{crepeau6} \left\{
 \begin{array}{ll}
\rho(x)y_{tt}=-(\sigma(x)y_{xx})_{xx}+(q(x)y_x)_x-\(y^2\)_{xx},&(t,x)\in
(0, T )\times(0, l),\\
y(t, 0) =\si(l)y_{xx}(t,0) =y(t, l) = 0,~~\si(l)y_{xx}(t, l) = u(t),&t\in(0,T),\\
y(0, x) = y^{0},~~y_{t}(0, x) = y^{1},& x\in(0,l),\\
\end{array}
 \right.
\eean where $u$ is a control placed at the extremity $x = l$, and
the functions $y^0$, $y^1$ are the initial conditions. Here and in
what follows, we assume that the coefficients \be \rho,\sigma \in
H^{3}(0,l),~q\in H^{1}(0,l), \label{crepeau7}\ee and there exist
constants $\rho_0,~\sigma_0>0$, such that \be
\rho(x)\geq\rho_{0},~~\se(x)\geq\se_{0},~~ q(x)\geq0,~~x \in
\[0, l\]. \label{crepeau8}\ee
\indent In this paper we prove that the linearized problem \bean
\label{crepeau9} \left\{
 \begin{array}{ll}
\rho(x)y_{tt}=-(\sigma(x)y_{xx})_{xx}+(q(x)y_x)_x,&(t,x)\in(0,T)\times(0,l),\\
y(t, 0) =\si(l)y_{xx}(t,0) =y(t, l) = 0,~~\si(l)y_{xx}(t, l) = u(t),&t\in(0,T),\\
y(0, x) = y^{0},~~y_{t}(0, x) = y^{1},& x\in(0,l),\\
\end{array}
 \right.
\eean is exactly controllable in any time $T>0$, where the control
$u\in L^2(0,T)$ and the initial conditions $\(y^{0},y^1\)$ taken in
$H^1_0(0,l)\times H^{-1}(0,l)$. Our approach is essentially based on
a detailed spectral analysis and the qualitative theory of
fourth-order linear differential equations due
to Leighton and Nehari \cite{LN}. 
 More precisely, we prove that all the eigenfrequencies
$\(\sqrt{\lambda_{n}}\)_{n\geq1}$ associated System \eqref{crepeau9}
(without control) are simple, and by a precise computation of its
asymptotics we show that the spectral gap
"$\big{|}\sqrt{\lambda_{n+1}}-\sqrt{\lambda_{n}}\big{|}"$ is of
order $\mathcal{O}(n)$. Moreover, we prove that the first derivative
of each eigenfunction $\phi_n,~n\geq1,$ associated with uncontrolled
system does not vanish at the end $x=l$. As a consequence of the
theory of non-harmonic Fourier series and an extension of Ingham's
Theorem due to Haraux \cite{AH}, we establish the equivalence
between the $H^1_0\times H^{-1}$-norm of the initial data $\(\tilde
y^{0},\tilde y^1\)$ and the quantity $\int_{0}^{T}|\tilde
y_{x}(t,l)|^{2}dt$, where $\tilde y$ is the solution of System
\eqref{crepeau9} without control. Finally, we apply the Lions'{\rm
HUM} to deduce the
exact controllability result for the system \eqref{crepeau9}.\\
\indent At the end of this paper, we will discuss the local exact
controllability for the nonlinear control system \eqref{crepeau6}.
To this end, we use some results obtained by Cr\'{e}peau \cite{C}
together with standard fixed-point method (e.g., \cite[Chapter 4]{J}
and \cite{R}).\\
\indent The rest of the paper is divided in the following way: In
section $2$, we establish the well-posedness of System
\eqref{crepeau9} without control. In the next section, we prove the
simplicity of all the eigenvalues $\(\la_n\)_{n\geq1}$ and we
determinate the asymptotics of the associated spectral gap. In
section $4$, we prove the exact controllability result for the
linear control problem \eqref{crepeau9}. The last section is devoted
to the local controllability for the nonlinear control problem
\eqref{crepeau6}.
\section{Operator Framework and Well-posedness}
In this section we investigate the well-posedness of the linear
homogeneous Boussinesq problem, \bean \label{crepeau10} \left\{
\begin{array}{lll}
\rho(x){y}_{tt}=-(\sigma(x){y}_{xx})_{xx}+
(q(x){y}_x)_x,~&(t,x)\in(0,T)\times(0,l),\\
{y}(t, 0) =\si(l){y}_{xx}(t, 0) ={y}(t, l)
=\si(l){y}_{xx}(t, l) = 0,&t\in(0,T),\\
 {y}(0, x) = {y}^{0},~
 {y}_{t}(0, x) = {y}^{1},&x\in(0,l).
\end{array}
 \right.
\eean First of all, let us define by $L^2_\rho(0,l)$ the space of
functions $f$ such that $$\int_0^l |f|^{2}\rho(x)dx<\infty,$$ and we
denote by $H^k(0,l)$ the $L^2_\rho(0,l)-$based Sobolev spaces for
$k> 0$. We consider the following Sobolev space \be H^{2}(0,l)\cap
H^{1}_{0}(0,l) \label{crepeau11}\ee endowed with the norm $
\|u\|_{H^{2}(0,l)\cap H^{1}_{0}(0,l)}=\|u''\|_{L^2_\rho(0,l)}$. It
is easily seen from Rellich's theorem that the space $H^{2}(0,l)\cap
H^{1}_{0}(0,l)$ is densely and compactly embedded in the space
$L^2_\rho(0,l)$. In the sequel, we introduce the operator
$\mathcal{A}$ defined in $L^2_\rho(0,l)$ by setting:
 \be \mathcal{A}(y)= \rho^{-1}\((\sigma y'')''-(qy')'\), \label{crepeau12}\ee
 on the domain
 \be \mathcal D(\mathcal{A})=\{y\in H^{4}(0,l) \hbox { such that } y ,~ y''\in H^{1}_{0}(0,l) \}, \label{crepeau13}\ee
which is dense in $L^2_{\rho}(0,l)$.
\begin{Lem}\label{rr}
The linear operator $\mathcal{A}$ is positive and self-adjoint such
that $\mathcal{A}^{-1}$ is compact. Moreover, the linear operator
$\mathcal{A}^{\frac{1}{2}}$ generates a strongly continuous
semi-group on $L^{2}_{\rho}(0,l)$.
\end{Lem}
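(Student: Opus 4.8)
The plan is to realize $\mathcal{A}$ as the self-adjoint operator associated with a coercive sesquilinear form and then read off all four assertions from the spectral theorem. Write $H=L^2_\rho(0,l)$ with inner product $\langle f,g\rangle_\rho=\int_0^l f\,\bar g\,\rho\,dx$, set $V=H^2(0,l)\cap H^1_0(0,l)$, and on $V$ introduce
\be
a(u,v)=\int_0^l \sigma\,u''\,\bar v''\,dx+\int_0^l q\,u'\,\bar v'\,dx .
\ee
First I would establish the integration-by-parts identity $\langle\mathcal{A}u,v\rangle_\rho=a(u,v)$ for $u\in\mathcal D(\mathcal A)$ and $v\in V$. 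Since the factors $\rho^{-1}$ and $\rho$ cancel, integrating $\int_0^l\big((\sigma u'')''-(qu')'\big)\bar v\,dx$ by parts twice produces boundary terms $[(\sigma u'')'\bar v]_0^l$ and $[qu'\bar v]_0^l$, which vanish because $v(0)=v(l)=0$, together with $[\sigma u''\bar v']_0^l$, which vanishes because the condition $u''\in H^1_0(0,l)$ (recall $\sigma\ge\sigma_0>0$) forces $u''(0)=u''(l)=0$. The identity is manifestly symmetric, so $\mathcal A$ is symmetric; taking $v=u$ gives $a(u,u)=\int_0^l\sigma|u''|^2+\int_0^l q|u'|^2\ge\sigma_0\|u''\|_{L^2}^2\ge0$, with equality forcing $u''\equiv0$, which combined with $u\in H^1_0(0,l)$ yields $u\equiv0$. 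Hence $\mathcal A$ is positive.

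Next, the form $a$ is bounded and symmetric on $V$, and it is coercive: since the norm on $V$ is precisely $\|u\|_V=\|u''\|_{L^2_\rho(0,l)}$, one has $a(u,u)\ge\sigma_0\int_0^l|u''|^2\,dx\ge \sigma_0\|\rho\|_\infty^{-1}\|u\|_V^2$, so coercivity is immediate from the definition of the space. By the first representation theorem for forms (equivalently, the Friedrichs extension / Lax--Milgram construction) there is a unique positive self-adjoint operator on $H$ associated with $a$, and the task is to identify it with $\mathcal A$ on the domain \eqref{crepeau13}. This domain identification is the genuinely technical step and the main obstacle: given $f\in H$, the weak solution $u\in V$ of $a(u,v)=\langle f,v\rangle_\rho$ for all $v\in V$ satisfies $(\sigma u'')''-(qu')'=\rho f$ in the distributional sense, and because $\rho,\sigma\in H^3(0,l)$ and $q\in H^1(0,l)$ by \eqref{crepeau7}, a one-dimensional elliptic-regularity bootstrap promotes $u$ to $H^4(0,l)$; testing against $v\in V$ then recovers the natural boundary conditions $u''(0)=u''(l)=0$, while $u\in H^1_0(0,l)$ is the essential condition built into $V$. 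This is exactly where the smoothness hypotheses \eqref{crepeau7} on the coefficients are used.

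Finally, boundedness of $\mathcal A^{-1}\colon H\to V$ (immediate from coercivity) composed with the compact embedding $V\hookrightarrow H$ recorded before the statement (Rellich) shows that $\mathcal A^{-1}$ is compact, so $\mathcal A$ has discrete spectrum $0<\la_1\le\la_2\le\cdots\to\infty$. Since $\mathcal A$ is positive and self-adjoint, the spectral theorem defines $\mathcal A^{1/2}$ as a positive self-adjoint operator with form domain $\mathcal D(\mathcal A^{1/2})=V$; Stone's theorem then shows that $\pm i\,\mathcal A^{1/2}$ generate strongly continuous unitary groups on $H$ (equivalently, $-\mathcal A^{1/2}$ generates an analytic contraction semigroup), which is precisely the evolution needed to solve the abstract wave equation $y_{tt}+\mathcal A y=0$ underlying \eqref{crepeau10}. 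All of this is the standard form-and-spectral-theorem machinery once the domain identification of the previous step is settled.
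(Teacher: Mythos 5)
Your proof is correct, and it takes a genuinely different (and more self-contained) route than the paper's. The paper argues directly on the operator: it computes $\langle\mathcal{A}y,y\rangle_{L^2_\rho}=\int_0^l\big(\sigma|y''|^2+q|y'|^2\big)\,dx$ by integration by parts to get symmetry and positivity, then invokes the basic criterion for self-adjointness by asserting (without proof) that $\mathrm{Ran}(\mathcal{A}-i\,\mathrm{Id})=L^2_\rho(0,l)$, and finally gets compactness of $\mathcal{A}^{-1}$ from the Rellich embedding of $H^2(0,l)\cap H^1_0(0,l)$ into $L^2_\rho(0,l)$; the semigroup claim is not argued at all. You instead realize $\mathcal{A}$ as the operator associated with the coercive symmetric form $a(u,v)$ via the first representation theorem, which replaces the unproved range condition by an explicit solvability-plus-regularity step: given $f\in L^2_\rho$, solve $a(u,v)=\langle f,v\rangle_\rho$ weakly and bootstrap to $u\in H^4$ with the natural boundary conditions $u''(0)=u''(l)=0$. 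These two routes carry essentially the same analytic content --- verifying the range condition in the paper's argument requires solving exactly the boundary value problem your regularity step handles --- but yours makes that content visible and correctly isolates where the hypotheses \eqref{crepeau7} on the coefficients enter, whereas the paper hides it behind ``it is easy to show.'' You also supply the missing justification of the last assertion of the lemma (Stone's theorem applied to the positive self-adjoint root $\mathcal{A}^{1/2}$), which is what is actually needed for the abstract second-order evolution equation. The one place you are still schematic is the elliptic-regularity bootstrap itself, but you flag it as the technical crux rather than eliding it, and in dimension one it is routine; on balance your write-up is more complete than the paper's.
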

\begin{Proof}
Let $y \in\mathcal D(\mathcal{A})$, then by integration by parts, we
have \bean \langle \mathcal{A}y,
y\rangle_{L^2_\rho(0,l)}&=&\int_{0}^{l}\Big{(}(\sigma(x)y'')''-(q(x)y')'\Big{)}y dx \nonumber\\
&=&\int_{0}^{l}\sigma(x)|y''|^{2}dx+q(x)|y'|^{2}dx\label{crepeau14}
\eean
 since $\sigma>0$ and $q\geq0$ then $\langle
\mathcal{A}y, y\rangle_{L^2_\rho(0,l)}>0,$ and hence the quadratic
form has a positive real values, so the linear operator
$\mathcal{A}$ is symmetric. Furthermore, it is easy to show that
$Ran(\mathcal{A}-iId)=L^2_\rho(0,l)$, which implies that
$\mathcal{A}$ is selfadjoint. Since the space $H^{2}(0,l)\cap
H^{1}_{0}(0,l)$ is continuously and compactly embedded in the space
$L^2_\rho(0,l)$, then $\mathcal{A}^{-1}$ is compact in
$L^2_\rho(0,l)$.
\end{Proof}\\
Lemma \ref{rr} leads to the following corollary.
\begin{Cor}\label{cor}The spectrum of the operator $\mathcal{A}$ is discrete. It consists of a
 sequence of positive eigenvalues
$(\lambda_{n})_{n\in\mathbb{N}^{*}}$ tending to $+\infty$:
$$0<\lambda_{1}\leq\lambda_{2}\leq.......\leq\lambda_{n}\leq.....
\underset{n\rightarrow +\infty}{\longrightarrow}+\infty.$$ Moreover,
the corresponding eigenfunctions $(\Phi_{n})_{n\geq1}$ can be chosen
to form an orthonormal basis in $L^2_\rho(0,l)$.\end{Cor}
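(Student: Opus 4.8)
The plan is to read the statement off Lemma \ref{rr} by applying the spectral theorem for compact self-adjoint operators to the inverse $\mathcal{A}^{-1}$. First I would record the three properties supplied by Lemma \ref{rr}: $\mathcal{A}$ is positive, self-adjoint, and has compact inverse. Positivity already forces injectivity, since $\mathcal{A}y=0$ would give $\langle\mathcal{A}y,y\rangle_{L^2_\rho(0,l)}=0$ and hence $y=0$; thus $0$ is not an eigenvalue of $\mathcal{A}$ and the bounded operator $K:=\mathcal{A}^{-1}$ is defined on all of $L^2_\rho(0,l)$. The inverse of a boundedly invertible self-adjoint operator is again self-adjoint, and $\langle K x,x\rangle=\langle\mathcal{A}y,y\rangle>0$ for $y=Kx\neq0$ shows $K$ is positive. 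Hence $K$ is a compact, self-adjoint, positive operator on the separable Hilbert space $L^2_\rho(0,l)$.

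Next I would invoke the spectral theorem for compact self-adjoint operators applied to $K$. This produces a sequence of real eigenvalues $(\mu_n)_{n\geq1}$ of $K$, which positivity allows me to take strictly positive and to arrange in decreasing order $\mu_1\geq\mu_2\geq\cdots>0$, with $\mu_n\to0$ as $n\to\infty$ (the only possible accumulation point of the spectrum of a compact operator on an infinite-dimensional space being $0$), together with associated eigenfunctions $(\Phi_n)_{n\geq1}$ that form an orthonormal basis of $L^2_\rho(0,l)$.

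Then I would transfer these spectral data back to $\mathcal{A}$. From $K\Phi_n=\mu_n\Phi_n$ with $\mu_n\neq0$ one gets $\Phi_n\in\mathcal D(\mathcal{A})$ and $\mathcal{A}\Phi_n=\mu_n^{-1}\Phi_n$, so that $\lambda_n:=\mu_n^{-1}$ are eigenvalues of $\mathcal{A}$ with exactly the same eigenfunctions. The decreasing order $\mu_1\geq\mu_2\geq\cdots$ turns into the increasing order $0<\lambda_1\leq\lambda_2\leq\cdots$, and $\mu_n\to0$ turns into $\lambda_n\to+\infty$. Finally, because the spectrum of $\mathcal{A}^{-1}$ equals $\{0\}\cup\{\mu_n:n\geq1\}$ and accumulates only at $0$, the spectrum of $\mathcal{A}$, namely $\{\mu_n^{-1}:n\geq1\}$, has no finite accumulation point and is therefore discrete, while the orthonormal basis $(\Phi_n)_{n\geq1}$ is precisely the claimed one.

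Since every step is a direct application of the spectral theorem, there is no genuine obstacle here; the only points that merit a short justification are that $K=\mathcal{A}^{-1}$ inherits self-adjointness and positivity from $\mathcal{A}$, and that $\mu_n\neq0$ guarantees the eigenfunctions of $K$ and $\mathcal{A}$ coincide. Both are routine.
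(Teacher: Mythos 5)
Your argument is correct and is exactly the standard route the paper intends: the corollary is stated as an immediate consequence of Lemma \ref{rr} (the paper gives no separate proof), and your application of the spectral theorem for compact, positive, self-adjoint operators to $\mathcal{A}^{-1}$, followed by inverting the eigenvalues, is precisely the omitted argument. No gaps.
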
 We give
now a characterization of some fractional powers of the linear
operator $\mathcal{A}$ which will be useful to give a description of
the solutions of Problem \eqref{crepeau10} in terms of Fourier
series. According to Lemma \ref{rr}, the operator $\mathcal{A}$ is
positive and self-adjoint, and hence it generates a scale of
interpolation spaces $\mathcal{H}_{\theta}$, $\theta \in
\mathbb{R}$. For $\theta\geq0$, the space $\mathcal{H}_{\theta}$
coincides with $\mathcal D(\mathcal{A}^{\theta})$ and is equipped
with the norm $\|u\|_\theta^2=\langle \mathcal{A}^\theta u,
\mathcal{A}^\theta u\rangle_{L^2_\rho(0,l)}$, and for $\theta< 0$ it
is defined as the completion of $L^2_\rho(0,l)$ with respect to this
norm. Furthermore, we have the following spectral representation of
space $\mathcal{H}_{\theta}$,
$$\mathcal{H}_{\theta}=
\{u(x)=\sum\limits_{n\in\N\backslash\{0\}}c_n\Phi_n(x)~:
 ~\|u\|_{\theta}^2=\sum\limits_{n\in\N\backslash\{0\}}\la_n^{2\theta}|c_n|^{2}<\infty\},$$
where $\theta\in \R$, and the eigenfunctions $\(\Phi_{n}\)_{n\geq1}$
are defined in Corollary \ref{cor}. In particular,
$\mathcal{H}_{0}=L^2_\rho(0,l) \hbox{ and }
\mathcal{H}_{1/2}= H^{2}(0,l)\cap H^{1}_{0}(0,l)$.\\
Obviously, the linear problem \eqref{crepeau10} can be rewritten in
the abstract form
\begin{equation*}
\ddot{{y}}(t)+\mathcal{A}{y}(t)=0,~ (y(0),\dot
y(0))=({y}^{0},{y}^{1}),~~t\geq0,
\end{equation*}
where $\mathcal{A}$ is defined by \eqref{crepeau12}. As a
consequence of the spectral decomposition of the operator
$\mathcal{A}$ and by \cite[Theorem 1.1]{V.K}, we have the following
existence and uniqueness result for Problem \eqref{crepeau10} in the
spaces $\mathcal{H}_{\theta} \times \mathcal{H}_{\theta-1/2}$ with
$\theta\in\R$.
\begin{Prop}\label{pos}
Let $\theta\in\R$ and $({y}^0,{y}^1)\in\mathcal{H}_{\theta} \times
\mathcal{H}_{\theta-1/2}$. Then Problem \eqref{crepeau10} has a
unique solution ${y}\in C([0,T],\mathcal{H}_{\theta})\cap
C^{1}([0,T], \mathcal{H}_{\theta-1/2})$ and is given by
the following Fourier series 
\be {y}(t,x)=\sum\limits_{n\in\N\backslash\{0\}} \Big(a_n
\cos(\sqrt{\la_n}t)+
\frac{b_n}{\sqrt{\la_n}}\sin(\sqrt{\la_n}t)\Big){\Phi}_n(x),\label{sl}\ee
where ${y}^0=\sum\limits_{n\in\N\backslash\{0\}} a_n\phi_n$ and
${y}^1=\sum\limits_{n\in\N\backslash\{0\}}b_n\phi_n$. 
\end{Prop}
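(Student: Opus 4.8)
The plan is to exploit the spectral decomposition of $\mathcal{A}$ from Corollary \ref{cor} to diagonalize the abstract equation and reduce it to an infinite, decoupled family of scalar harmonic oscillators. Writing $y^0=\sum_{n\geq1}a_n\Phi_n$ and $y^1=\sum_{n\geq1}b_n\Phi_n$, the hypotheses $(y^0,y^1)\in\mathcal{H}_\theta\times\mathcal{H}_{\theta-1/2}$ read, in the spectral representation, $\sum_n\la_n^{2\theta}|a_n|^2<\infty$ and $\sum_n\la_n^{2\theta-1}|b_n|^2<\infty$. Seeking a solution of the form $y(t,x)=\sum_n y_n(t)\Phi_n(x)$ and projecting $\ddot y+\mathcal{A}y=0$ onto $\Phi_n$ formally yields $\ddot y_n+\la_n y_n=0$ with $y_n(0)=a_n$, $\dot y_n(0)=b_n$, whose unique solution is $y_n(t)=a_n\cos(\sqrt{\la_n}\,t)+\frac{b_n}{\sqrt{\la_n}}\sin(\sqrt{\la_n}\,t)$. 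Substituting this into the series produces exactly the candidate \eqref{sl}; it remains to show that this series genuinely defines a function with the asserted regularity and that it is the only solution.

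The core of the argument is a pair of uniform-in-$t$ estimates obtained from $|\cos|,|\sin|\leq1$. Using $|y_n(t)|^2\leq 2\big(|a_n|^2+\la_n^{-1}|b_n|^2\big)$ one gets $\|y(t)\|_\theta^2=\sum_n\la_n^{2\theta}|y_n(t)|^2\leq 2\big(\|y^0\|_\theta^2+\|y^1\|_{\theta-1/2}^2\big)$, uniformly for $t\in[0,T]$; differentiating each mode, $\dot y_n(t)=-a_n\sqrt{\la_n}\sin(\sqrt{\la_n}\,t)+b_n\cos(\sqrt{\la_n}\,t)$ and $|\dot y_n(t)|^2\leq2\big(\la_n|a_n|^2+|b_n|^2\big)$ give in the same way $\|\dot y(t)\|_{\theta-1/2}^2\leq 2\big(\|y^0\|_\theta^2+\|y^1\|_{\theta-1/2}^2\big)$. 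The corresponding tails $\sum_{n>N}\la_n^{2\theta}|y_n(t)|^2$ and $\sum_{n>N}\la_n^{2\theta-1}|\dot y_n(t)|^2$ are dominated, uniformly in $t$, by the tails of the two convergent data series, so the partial sums, which are finite linear combinations of the $\Phi_n$ with scalar smooth coefficients and are therefore continuous into $\mathcal{H}_\theta$ and $C^1$ into $\mathcal{H}_{\theta-1/2}$, converge uniformly on $[0,T]$. This legitimizes passing to the limit and differentiating term by term, yielding $y\in C([0,T],\mathcal{H}_\theta)\cap C^1([0,T],\mathcal{H}_{\theta-1/2})$; plugging back confirms the equation and the initial conditions.

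For uniqueness, if $w$ is the difference of two solutions then $w$ solves the equation with zero data; projecting onto the eigenbasis, the coefficients $w_n(t)=\langle w(t),\Phi_n\rangle_{L^2_\rho(0,l)}$ (read through the duality pairing when $\theta-1/2<0$) satisfy $\ddot w_n+\la_n w_n=0$ with $w_n(0)=\dot w_n(0)=0$, hence $w_n\equiv0$ for all $n$ and $w\equiv0$. I expect the only genuinely delicate point to be the handling of the negative-index spaces $\mathcal{H}_{\theta-1/2}$ (defined as completions), where both the convergence of the series and the projection step must be read off the weighted sequence norm rather than a bona fide inner product; because all the estimates above are expressed purely in terms of $\big(\la_n^{2\theta}|a_n|^2\big)$ and $\big(\la_n^{2\theta-1}|b_n|^2\big)$, the argument is insensitive to the sign of $\theta$ and goes through uniformly for every $\theta\in\R$. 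Alternatively, once the diagonalization is in place one may simply invoke \cite[Theorem 1.1]{V.K}.
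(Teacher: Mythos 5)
Your proof is correct and follows the same route as the paper, which simply notes the spectral decomposition of $\mathcal{A}$ and invokes \cite[Theorem 1.1]{V.K}; your argument is the standard detailed proof of that cited result (decoupling into scalar oscillators, uniform tail estimates in the weighted sequence norms, term-by-term differentiation, and uniqueness by projection). Your closing remark that one may alternatively just cite \cite{V.K} is exactly what the paper does.
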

\section{Spectral Analysis}\label{Spe}
In this section, we shall establish the spectral proprieties related
to System \eqref{crepeau10}. To this end we need some results of the
qualitative theory of fourth-order linear differential equations due
to Leighton and Nehary \cite{LN}, see also \cite{B2, BK, BK1}.\\
We consider the following spectral problem which arises by applying
separation of variables to System \eqref{crepeau10},
\begin{empheq}[left=\empheqlbrace]{align}
&(\sigma(x)\phi'')''-(q(x)\phi')'=\lambda \rho(x) \phi,~~x\in(0,l),\label{ss}\\
&\phi(0)=\phi''(0)=\phi(l)=0,\label{ss*1}\\
&\phi''(l)=0. \label{s1}
\end{empheq}
Our first main result in this section is the following:
\begin{Theo}\label{Lem2}
All the eigenvalues $(\la_n)_{n\geq1}$ of the spectral problem
\eqref{ss}-\eqref{s1} are simple. Moreover, the corresponding
eigenfunctions $\(\Phi_n\)_{n\geq1}$ satisfy \be
\Phi_n'(l)\mathrm{T}\Phi_n(l)<0~~\hbox{for all
}n\in\N\backslash\{0\}, \label{sp30}\ee where
$\mathrm{T}\Phi_n=(\si(x)\Phi''_n)'- q(x)\Phi'_n$.
\end{Theo}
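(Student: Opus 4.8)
The plan is to pass to the first-order (quasi-derivative) form of the equation and combine a positivity/disconjugacy property of a distinguished solution with the Lagrange identity. Write $\phi^{[0]}=\phi$, $\phi^{[1]}=\phi'$, $\phi^{[2]}=\si(x)\phi''$ and $\phi^{[3]}=(\si(x)\phi'')'-q(x)\phi'=\mathrm{T}\phi$, so that \eqref{ss} becomes the system $(\phi^{[0]})'=\phi^{[1]}$, $(\phi^{[1]})'=\phi^{[2]}/\si$, $(\phi^{[2]})'=\phi^{[3]}+q\phi^{[1]}$, $(\phi^{[3]})'=\la\rho\,\phi^{[0]}$, while the boundary conditions \eqref{ss*1}--\eqref{s1} read $\phi^{[0]}(0)=\phi^{[2]}(0)=\phi^{[0]}(l)=\phi^{[2]}(l)=0$. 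Thus any eigenfunction is determined by the two free data $\phi'(0)$ and $\mathrm{T}\phi(0)$, and by Corollary \ref{cor} the corresponding $\la=\la_n$ is positive.

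The heart of the matter is the following positivity lemma. Let $v$ be the solution of \eqref{ss} for $\la=\la_n$ with $v^{[0]}(0)=v^{[1]}(0)=v^{[2]}(0)=0$ and $v^{[3]}(0)=1$ (the principal solution issuing from the left end). I claim all four quasi-derivatives $v^{[0]},v^{[1]},v^{[2]},v^{[3]}$ are strictly positive on $(0,l]$. Near $0^{+}$ one has $v^{[3]}\approx1$, $v^{[2]}\approx x$, $v^{[1]}\approx x^{2}/2\si(0)$, $v^{[0]}\approx x^{3}/6\si(0)$, all positive; and if $x_{0}\in(0,l]$ were the first point where one of them vanished, then integrating the system on $(0,x_{0})$ and using $\rho>0$, $\si>0$, $q\ge0$ and $\la_{n}>0$ gives $v^{[3]}(x_{0})=1+\la_{n}\int_{0}^{x_{0}}\rho\,v^{[0]}>0$, and then successively $v^{[2]}(x_{0})>0$, $v^{[1]}(x_{0})>0$, $v^{[0]}(x_{0})>0$, a contradiction. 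This is exactly the type of disconjugacy property furnished by the Leighton--Nehari theory \cite{LN}; here the sign hypotheses \eqref{crepeau8} make it elementary. Simplicity follows at once: an eigenfunction with $\phi'(0)=0$ would have $\phi^{[0]}(0)=\phi^{[1]}(0)=\phi^{[2]}(0)=0$, hence be a multiple of $v$, which is impossible since $v(l)>0$ while $\phi(l)=0$. So every eigenfunction has $\phi'(0)\ne0$, the eigenspace of $\la_{n}$ is one-dimensional, and since $\mathcal{A}$ is self-adjoint (Lemma \ref{rr}) the geometric and algebraic multiplicities coincide, giving simplicity.

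For the sign relation \eqref{sp30} I use the Lagrange (concomitant) identity. For two solutions $f,g$ of \eqref{ss} sharing the same $\la_{n}$, set $\{f,g\}:=f^{[3]}g-fg^{[3]}+f^{[1]}g^{[2]}-f^{[2]}g^{[1]}$; a direct computation gives $\{f,g\}'=gLf-fLg=0$ since $Lf=\la_{n}\rho f$ and $Lg=\la_{n}\rho g$, where $Lw=(\si w'')''-(qw')'$, so $\{f,g\}$ is constant. Taking $f=\Phi_{n}$ and $g=v$ and using the vanishing data at $0$ yields $\{\Phi_{n},v\}(0)=0$, hence $\{\Phi_{n},v\}\equiv0$. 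Evaluating at $x=l$, where $\Phi_{n}(l)=\Phi_{n}^{[2]}(l)=0$, leaves $\mathrm{T}\Phi_{n}(l)\,v(l)+\Phi_{n}'(l)\,v^{[2]}(l)=0$, i.e. $\Phi_{n}'(l)\,v^{[2]}(l)=-\mathrm{T}\Phi_{n}(l)\,v(l)$. Because $v(l)>0$ and $v^{[2]}(l)>0$ by the positivity lemma, $\Phi_{n}'(l)$ and $\mathrm{T}\Phi_{n}(l)$ vanish together; but their simultaneous vanishing would force all four quasi-derivatives of $\Phi_{n}$ to vanish at $l$, whence $\Phi_{n}\equiv0$. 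Thus both are nonzero, and multiplying the last identity by $\mathrm{T}\Phi_{n}(l)$ gives $\Phi_{n}'(l)\,\mathrm{T}\Phi_{n}(l)\,v^{[2]}(l)=-\big(\mathrm{T}\Phi_{n}(l)\big)^{2}v(l)<0$, which after dividing by $v^{[2]}(l)>0$ is precisely \eqref{sp30}.

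The step I expect to carry all the weight is the positivity lemma (the disconjugacy of the principal solution $v$); once it is in hand, simplicity and the sign condition are bookkeeping with the first-order system and the concomitant. The one point requiring a little care is that $\Phi_{n}'(l)\ne0$, which I obtain not by a separate argument at the right endpoint but for free from the relation $\Phi_{n}'(l)\,v^{[2]}(l)=-\mathrm{T}\Phi_{n}(l)\,v(l)$ together with the impossibility that $\Phi_{n}$ have a full quadruple zero at $l$.
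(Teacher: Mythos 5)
Your argument is correct, and it takes a genuinely different route from the paper's. The paper first removes the middle term $(q\phi')'$ via the Leighton--Nehari substitution \eqref{s6} built from a positive solution $h$ of \eqref{s4}, so that Lemma~\ref{Ls1} can be quoted verbatim for the resulting two-term equation; it then runs that lemma twice, once anchored at $x=0$ (to show the space of solutions satisfying the three conditions $\phi(0)=\phi''(0)=\phi''(l)=0$ is one-dimensional, hence geometric simplicity) and once anchored at $x=l$ with a second auxiliary solution $\tilde h$ (to contradict $\Phi_n'(l)\mathrm{T}\Phi_n(l)\ge 0$). You instead keep $q$ in place and reprove the needed positivity directly for the quasi-derivative system, which works precisely because $q\ge0$ and $\la_n>0$ make the integral bootstrap monotone; your positivity lemma for the principal solution $v$ is in substance the first half of Lemma~\ref{Ls1} adapted to the three-term operator. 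The real divergence is in the proof of \eqref{sp30}: where the paper uses the backward half of Lemma~\ref{Ls1} on the transformed problem, you use the constancy of the concomitant $\{f,g\}=f^{[3]}g-fg^{[3]}+f^{[1]}g^{[2]}-f^{[2]}g^{[1]}$ to tie the boundary data of $\Phi_n$ at $l$ to the positive values $v(l)$, $v^{[2]}(l)$, obtaining $\Phi_n'(l)\,v^{[2]}(l)=-\mathrm{T}\Phi_n(l)\,v(l)$ and hence both the non-vanishing of $\Phi_n'(l)$ and the sign in \eqref{sp30} in one stroke; I checked the concomitant computation and the boundary evaluations at $0$ and $l$, and they are right. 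Your approach buys self-containedness (no substitution, no second auxiliary solution, no need to track how the boundary conditions transform, which is the delicate point in the paper) plus $\Phi_n'(l)\ne0$ for free; the paper's buys direct reliance on the quoted lemma and would extend to $q$ of variable sign provided \eqref{s4} still has a positive solution. One line worth adding to your write-up: one-dimensionality of the eigenspace needs a final step after ``every eigenfunction has $\phi'(0)\ne0$'' --- given two eigenfunctions, the combination $\phi_2'(0)\phi_1-\phi_1'(0)\phi_2$ has a triple zero at $0$, hence is a multiple of $v$, hence vanishes identically since it vanishes at $l$ while $v(l)>0$.
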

In order to prove this theorem, we need the following result.
\begin{Lem}\cite[Lemma 2.1]{LN}\label{Ls1}
Let $\phi$ be a nontrivial solution of the differential equation
\be\label{s7} (\sigma(x)\phi'')''-\rho(x) \phi=0. \ee If $\phi,
\phi', \phi''$ and $(\sigma\phi'')'$ are nonnegative at $x=a$ (but
not all zero) they are positive for all $x>a$. If $\phi, -\phi',
\phi''$ and $-(\sigma\phi'')'$ are nonnegative at $x=a$ (but not all
zero) they are positive for all $x<a$.
\end{Lem}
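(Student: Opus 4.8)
The final statement to prove is Theorem~\ref{Lem2}, asserting simplicity of all eigenvalues of \eqref{ss}--\eqref{s1} together with the boundary sign relation \eqref{sp30}. The plan is to combine a standard Wronskian/self-adjointness argument for simplicity with a sign-propagation argument in the spirit of Lemma~\ref{Ls1} for the boundary inequality. Throughout, write $\mathrm{T}\phi=(\sigma\phi'')'-q\phi'$ for the quasi-derivative, so that the equation \eqref{ss} reads $(\mathrm{T}\phi)'=\lambda\rho\phi$ and the natural boundary data at each endpoint are the four quantities $\phi,\ \phi',\ \sigma\phi'',\ \mathrm{T}\phi$.

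First I would prove simplicity. Since $\mathcal{A}$ is self-adjoint with compact resolvent (Lemma~\ref{rr}, Corollary~\ref{cor}), each eigenvalue has finite multiplicity; I must rule out multiplicity two. The boundary conditions \eqref{ss*1}--\eqref{s1} impose $\phi(0)=\phi''(0)=\phi(l)=\phi''(l)=0$, i.e.\ four homogeneous conditions on a fourth-order equation. Suppose $\phi_1,\phi_2$ are two eigenfunctions for the same $\lambda$. At $x=0$ both satisfy $\phi_i(0)=0$ and $(\sigma\phi_i'')(0)=0$, so the only free Cauchy data at $0$ are $\phi_i'(0)$ and $(\mathrm{T}\phi_i)(0)$. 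If $\phi_1'(0)=0$ as well, then $\phi_1$ has Cauchy data $(\phi_1,\phi_1',\sigma\phi_1'',\mathrm{T}\phi_1)(0)=(0,0,0,c)$; I would argue that $c\neq0$ is impossible for an eigenfunction vanishing at $l$ (otherwise $\phi_1\equiv0$ by the sign-propagation of Lemma~\ref{Ls1}, since $(0,0,0,c)$ with $c>0$ forces positivity on $(0,l)$ contradicting $\phi_1(l)=0$). Hence $\phi_i'(0)\neq0$, and forming $\psi=\phi_1'(0)\phi_2-\phi_2'(0)\phi_1$ yields an eigenfunction with $\psi(0)=\psi'(0)=(\sigma\psi'')(0)=0$; the same sign argument forces $\psi\equiv0$, so $\phi_1,\phi_2$ are proportional. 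This gives simplicity.

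Next I would establish \eqref{sp30}. Fix an eigenfunction $\Phi_n$ with eigenvalue $\lambda_n>0$ (positivity from Lemma~\ref{rr}); normalize the sign by $\Phi_n'(l)<0$, say. The strategy is to read off the signs of the four boundary quantities at $x=l$ and apply the reflected half of Lemma~\ref{Ls1}. The key observation is that \eqref{ss} can be written as $(\sigma\Phi_n'')''=\lambda_n\rho\,\Phi_n+(q\Phi_n')'$, and I would first treat the model case $q\equiv0$ where the equation becomes $(\sigma\Phi_n'')''=\lambda_n\rho\,\Phi_n$, precisely the hypothesis of Lemma~\ref{Ls1} after absorbing $\lambda_n$ into $\rho$. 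I would examine $\Phi_n$ on $(0,l)$ and use the boundary data $\Phi_n(l)=0$, $(\sigma\Phi_n'')(l)=0$, together with the normalization $\Phi_n'(l)<0$, to determine the sign of $\mathrm{T}\Phi_n(l)$: applying the contrapositive of the second (left-propagating) statement of Lemma~\ref{Ls1} at $a=l$, the data $\big(\Phi_n,-\Phi_n',\Phi_n'',-\mathrm{T}\Phi_n\big)(l)$ cannot be nonnegative without forcing positivity for $x<l$ and hence contradicting $\Phi_n(l)=0$; this pins down $\mathrm{T}\Phi_n(l)>0$, giving $\Phi_n'(l)\,\mathrm{T}\Phi_n(l)<0$.

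The main obstacle will be extending the sign-propagation argument from the $q\equiv0$ case of Lemma~\ref{Ls1} to the actual equation, which carries both the spectral term $\lambda_n\rho$ and the lower-order term $-(q\Phi_n')'$. Since $q\geq0$ by \eqref{crepeau8}, I expect the $q$-term to preserve the disconjugacy/positivity structure rather than destroy it, so the cleanest route is to prove the analogue of Lemma~\ref{Ls1} directly for the operator $\mathrm{T}$: namely, if $\phi,\phi',\sigma\phi'',\mathrm{T}\phi$ are nonnegative at $a$ (not all zero) then they are positive for $x>a$, the proof being by integrating the four first-order relations $\phi'=\phi'$, $(\sigma\phi'')=\int\mathrm{T}\phi+\int q\phi'$, $(\mathrm{T}\phi)'=\lambda\rho\phi$ and bootstrapping positivity forward using $\sigma,\rho>0$, $q\geq0$. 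I would verify that the $q$-terms enter with the correct sign at each integration so that nonnegativity is propagated; this monotone-iteration step is the technical heart and the place where the hypotheses \eqref{crepeau7}--\eqref{crepeau8} are genuinely used. Once this extended comparison lemma is in hand, both the simplicity argument and the boundary inequality \eqref{sp30} follow by the applications sketched above.
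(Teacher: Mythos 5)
Your proposal does not address the statement in question. The statement to be proved is Lemma \ref{Ls1} itself --- the Leighton--Nehari sign-propagation property for solutions of $(\sigma(x)\phi'')''-\rho(x)\phi=0$ --- whereas you have written a proof sketch of Theorem \ref{Lem2} (simplicity of the eigenvalues and the sign relation \eqref{sp30}) which invokes Lemma \ref{Ls1} repeatedly as a known tool (its ``first statement'', its ``second (left-propagating) statement'', its contrapositive). As a proof of Lemma \ref{Ls1} this is circular: the lemma cannot be used to establish itself. The paper, for its part, gives no proof of this lemma at all --- it is quoted verbatim from \cite[Lemma 2.1]{LN} --- so the only thing to judge here is whether your argument proves the quoted statement, and it does not: it proves a different theorem of the paper conditionally on the quoted statement.

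The irony is that the one place where you come close to what is actually needed is your final paragraph, where you propose to prove a $q$-dependent analogue of Lemma \ref{Ls1} by rewriting the equation as a first-order system in the quantities $\phi,\ \phi',\ \sigma\phi'',\ \mathrm{T}\phi$ and ``bootstrapping positivity forward''. That monotone-iteration idea, specialized to $q\equiv 0$, is precisely how Lemma \ref{Ls1} is proved: setting $u=\phi$, $v=\phi'$, $w=\sigma\phi''$ (same sign as $\phi''$ since $\sigma>0$), $z=(\sigma\phi'')'$, the equation becomes the cooperative system
\begin{equation*}
u'=v,\qquad v'=\sigma^{-1}w,\qquad w'=z,\qquad z'=\rho u ,
\end{equation*}
whose coefficient matrix has nonnegative entries; hence nonnegative data at $x=a$ stay nonnegative to the right, each quantity is then nondecreasing, and since at least one of them is strictly positive at $a$, strict positivity spreads through the cycle $u\to z\to w\to v\to u$ on any interval to the right of $a$. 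The statement for $x<a$ follows by the reflection $x\mapsto -x$, which reverses the signs of the odd-order quantities $\phi'$ and $(\sigma\phi'')'$. Had you carried this computation out as the main argument --- rather than leaving it as a sketched technical step in service of Theorem \ref{Lem2} --- you would have had a complete and correct proof. As it stands, the proposal proves the wrong statement and leaves the actual one unproved.
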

In the case $q\geq0$ or if the second-order equation \be\label{s4}
(\sigma(x) h')'-q(x)h=0,~~x\in(0,l), \ee has a positive solution,
they gave a transformation \cite[Theorem 12.1]{LN} for removing the
"middle term" $(q(x)\phi')'$ from Equation \eqref{ss}. Namely, 
 if $h$ is a positive solution of the equation \eqref{s4}
then the following modified substitution \cite[Theorem 12.1]{LN}
\begin{equation}\label{s6}
s(x):=\frac {l\int_0^x h(t)ds}{\int_0^l h(t)ds},
\end{equation}
transform \eqref{ss} into the equation
\begin{equation}\label{s1t}
{\(\si h^{3}(s)\ddot \varphi\)}^{..} = h^{-1}\rh(s) \varphi,~~s\in(0,l),\\
\end{equation}
where $\si (x),h(x),\rh (x)$ are taken as functions of $s$,
$^\cdot:=\frac{d}{ds}$ and $\varphi:=\phi(x(s))$. Furthermore, we
have the following relations:
\begin{equation}\label{s2t}
{\dot \varphi}=\phi'h^{-1}, \quad
 h^{3}{\ddot \varphi}=h\phi''- \phi'h', \quad
{(\si h^{3}{\ddot \varphi})}^{.}=\mathrm{T}\phi. \\
\end{equation}
We are now ready to prove Theorem \ref{Lem2}.\\
\begin{Proof} We first prove that the set $\mathcal{E}_\la$ of solutions of the following
boundary value problem
\begin {equation}\label{eq:vs}
\left\{
\begin{array}{ll}
(\se(x)\phi'')'' -(q(x)\phi')'= \la \rho(x) \phi,~~x\in(0,l), \\
\phi(0) = \phi''(0)=0,\\
\phi''(l)=0,
\end{array}
\right .
\end{equation}
is one-dimensional subspace. To do this let $h$ denotes the solution
of Equation \eqref{s4} satisfying the initial conditions
\begin{equation}\label{s5}
h(0)=1, \quad h'(0)=0.
\end{equation}
It is known, by Sturm oscillation theorem \cite[Chapter 1]{BI} that
$h(x)>0$ on $[0,l]$. Since
$$\sigma(x)h'(x)=\int_0^xq(x)h(x)\rh(x)dx,$$ we have also, $h'(x)>0$
on $]0,l]$. Furthermore, by using the transformation \eqref{s6} and
the relations \eqref{s2t}, the boundary value problem \eqref{eq:vs}
can be rewritten in the form
\begin{empheq}[left=\empheqlbrace]{align} &{\(\si h^{3}(s)\ddot
\varphi\)}^{..} =\la h^{-1}\rh(s)  \varphi,~~s\in(0,l),\label{eq:cdt1}\\
&\varphi(0) = \ddot{\varphi}(0)=0, \label{eq:cdt}\\
&h^2{\ddot \varphi(l)}=-h'\dot{\varphi}(l).
\label{eq:cct}\end{empheq} Let $\varphi_1$ and $\varphi_2$ be two
linearly independent solutions of \eqref{eq:cdt1}-\eqref{eq:cct}.
Both $ \dot \varphi_{1}(0)$ and $ \dot \varphi_{2}(0)$ are different
from zero since otherwise the first statement of Lemma~\ref{Ls1}
would imply that $\dot \varphi_{1}\ddot \varphi_{1}(l)>0$ and $\dot
\varphi_{2}\ddot \varphi_{2}(l)>0$, and this is in contradiction
with the boundary condition \eqref{eq:cct}. In view of the
assumptions about $\varphi_{1}$ and  $\varphi_{2}$, the solution
$\psi$ defined by
 $$\psi(s)= \dot \varphi_{2}(0)\varphi_{1}(s) -  \dot \varphi_{1}(0)\varphi_{2}(s)$$
  satisfies
 $\psi(0)= \dot \psi(0)= \ddot \psi(0)=0$ and $\dot \psi\ddot \psi(l)\leq0$. This again contradicts
Lemma~\ref{Ls1} unless $\psi\equiv0$, which proves that
$\Dim\,\mathcal{E}_{\la}=1$. Therefore, each eigenvalue $\la_n$
$(n\geq1)$ is geometrically simple. On the other hand, by
Proposition \ref{rr}, the operator $\mathcal{A}$ is self-adjoint in
$L^2_\rho(0,l)$, and this implies that all the eigenvalues
$\(\la_n\)_{n\geq1}$ are algebraically simple.\\
Let us now prove \eqref{sp30}. Let $\{\la_n,\Phi_{n}\}$ $(n\geq1)$
be an eigenpair of Problem \eqref{ss}-\eqref{s1}, and let
$\tilde{h}$ be the solution of Equation \eqref{s4} satisfying the
following initial conditions
\begin{equation}\label{ss5}
\tilde h(l)=1, \quad \tilde h'(l)=0.
\end{equation}
In a same way as above, by Sturm oscillation theorem \cite[Chapter
1]{BI}, one can prove that  $\tilde h>0$ and $\tilde h'>0$ in
$\[0,l\]$. Furthermore, by using the substitution \eqref{s6},
Problem \eqref{ss}-\eqref{s1} transforms into
\begin{empheq}[left=\empheqlbrace]{align}
&{\(\si \tilde h^{3}(s)\ddot \varphi\)}^{..} = \la \tilde
h^{-1}\rho(s)
 \varphi,~~s
 \in(0,l),\label{sp10}\\
&\varphi(0)=0,~~ \tilde h^{2}\ddot\varphi(0)=-\tilde h'\dot\varphi(0),\label{sp100}\\
&\varphi(l)=0,~~{\ddot \varphi(l)}=0.\label{sp1000}
\end{empheq} Suppose that for some $n\geq1$, $\Phi_n'(l)\mathrm{T}\Phi_n(l) \geq
0$. Then from the relations \eqref{s2t} we have $${\dot
\varphi}_n(l){(\si h^{3}{\ddot \varphi_n(l)})}^{.}\geq0.$$ It
follows from the second statement of Lemma~\ref{Ls1}, that
$\varphi_n(0) \neq 0$,
but this contradicts the first boundary condition in \eqref{sp100}.
Hence, $\Phi_n'(l)\mathrm{T}\Phi_n(l)< 0$ for all
$n\in\N\backslash\{0\}$. 
This finalizes the proof of the theorem.

\end{Proof}\\
Our second main result in this section establishes the asymptotic
behavior of the spectral gap $\sqrt{\la_{n+1}}-\sqrt{\la_{n}}$ for
large $n$. Namely, we enunciate the following result:
\begin{Theo}\label{SP2}  The eigenvalues $(\la_{n})_{n\geq1}$ of the
associated spectral problem \eqref{ss}-\eqref{s1} satisfy the
following asymptotics: \be {\la_n}= \(\frac{n\pi}{\int^l_0
\sqrt[4]{\frac{\rho(t)}{\sigma(t)}}
 dt}\)^{4}+\mathcal{O}\(n^2\),\label{sp8}\ee as
$n\rightarrow\infty$. Furthermore,
 \be \sqrt{\la_{n+1}}-\sqrt{\la_n}=\mathcal{O}(n).\label{g1}\ee
\end{Theo}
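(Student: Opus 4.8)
The plan is to derive \eqref{sp8} from a uniform asymptotic description of the fundamental solutions of \eqref{ss} for large spectral parameter, and then to obtain \eqref{g1} by taking square roots. Set $\mu=\lambda^{1/4}$ and $g(x)=\int_0^x (\rho(t)/\sigma(t))^{1/4}\,dt$, so that $g(l)=L:=\int_0^l\sqrt[4]{\rho/\sigma}\,dt$ is the ``optical length'' appearing in the denominator of \eqref{sp8}. First I would expand the principal part as $(\sigma\phi'')''=\sigma\phi^{(4)}+2\sigma'\phi'''+\sigma''\phi''$ and treat \eqref{ss} as a fourth-order equation with the large parameter $\mu$. Its characteristic (eikonal) equation is $\sigma\zeta^4=\lambda\rho$, whose four roots are $\zeta=\mu\,\omega\,(\rho/\sigma)^{1/4}$ with $\omega\in\{1,-1,i,-i\}$; accordingly the classical Birkhoff--Tamarkin theory for linear differential equations with a large parameter furnishes, uniformly on $[0,l]$, a fundamental system
\begin{equation*}
\phi_\omega(x,\mu)=\sigma(x)^{-1/2}\exp\!\big(\mu\,\omega\,g(x)\big)\big(1+O(1/\mu)\big),\qquad \omega\in\{1,-1,i,-i\}.
\end{equation*}
The structural point that governs the sharpness of the remainder is that the $O(1)$ correction to all four roots equals the same real quantity $-\sigma'/(2\sigma)$; being real and independent of $\omega$, it produces only the amplitude factor $\sigma^{-1/2}$ and contributes no $O(1)$ shift to the oscillation phase. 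The remaining lower-order terms, in particular the middle term $(q\phi')'$ which is of lower order than the principal part, perturb the roots, and hence the phase, only at order $O(1/\mu)$.

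Next I would impose the boundary conditions $\phi(0)=\phi''(0)=\phi(l)=\phi''(l)=0$ on the general solution $\phi=\sum_\omega c_\omega\phi_\omega$ and express solvability as the vanishing of a $4\times4$ determinant $\Delta(\mu)$. Since $\omega^2\in\{1,1,-1,-1\}$, the two conditions at $x=0$ force $c_1+c_{-1}=0$ and $c_i+c_{-i}=0$ to leading order, and likewise at $x=l$; after inserting the asymptotics above and factoring out the nonvanishing amplitudes, the dominant balance of $\Delta(\mu)$ comes from the oscillatory modes $\omega=\pm i$ and reduces to
\begin{equation*}
\sin(\mu L)\,\big(1+O(1/\mu)\big)=0 .
\end{equation*}
The real exponential modes $\omega=\pm1$ enter only through subdominant contributions that, for these regular hinged boundary conditions, do not alter the leading $\sin(\mu L)$ factor. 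A Rouch\'e argument then locates the large roots of $\Delta$ at $\mu_n=n\pi/L+O(1/n)$, and raising to the fourth power gives $\lambda_n=\mu_n^4=(n\pi/L)^4+O(n^2)$, which is \eqref{sp8}. The estimate \eqref{g1} is then immediate: from $\lambda_n=(n\pi/L)^4+O(n^2)$ one gets $\sqrt{\lambda_n}=(n\pi/L)^2+O(1)$, whence $\sqrt{\lambda_{n+1}}-\sqrt{\lambda_n}=(\pi/L)^2(2n+1)+O(1)=O(n)$.

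I expect the main obstacle to be the passage from the leading behavior $\lambda_n\sim(n\pi/L)^4$ to the sharp remainder $O(n^2)$: a crude estimate yields only $\mu_n=n\pi/L+O(1)$, hence the weaker $O(n^3)$. Obtaining $O(n^2)$ requires showing that the $O(1)$ correction to the characteristic roots is a pure amplitude effect and that the subprincipal terms --- the damping-type term $2\sigma'\phi'''$ and the middle term $(q\phi')'$ --- contribute no $O(1)$ defect to the phase $\mu L$. The delicate part is thus to carry the Birkhoff fundamental system one order beyond the leading term, uniformly in $x$, and to track precisely how the growing exponential modes enter the secular determinant, so that the correction to $\sin(\mu L)=0$ is genuinely $O(1/\mu)$. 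One may optionally first apply the middle-term-removing substitution \eqref{s6} to eliminate $(q\phi')'$ before running this analysis; a direct computation shows that this substitution leaves the optical length $L$ unchanged, so it simplifies the root computation without affecting \eqref{sp8}.
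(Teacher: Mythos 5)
Your proposal is correct and follows essentially the same route as the paper: both invoke the standard large-parameter (Birkhoff--Tamarkin/Fedoryuk--Naimark) fundamental system with phases $\mu\,\omega\int_0^x(\rho/\sigma)^{1/4}dt$, reduce the boundary determinant to $\sin(\mu L)+\mathcal{O}(1/\mu)=0$, and apply Rouch\'e to get $\mu_n=n\pi/L+\mathcal{O}(1/n)$, hence $\lambda_n=(n\pi/L)^4+\mathcal{O}(n^2)$ and the gap estimate. The only (immaterial) discrepancy is your amplitude factor $\sigma^{-1/2}$ versus the paper's $\bigl(\rho^{3/4}\sigma^{1/4}\bigr)^{-1/2}$, which cancels from the characteristic equation in any case.
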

\begin{Proof} It is known (e.g., \cite[Chapter~5, p.235-239]{F} and
\cite[Chapter~2]{N}) that for $\la\in\mathbb{C}$, Equation
\eqref{ss} has four fundamental solutions $\{{\phi}_i(x,
\lambda)\}_{i=1}^{i=4}$
satisfying the asymptotic forms 
\bean \label{sp3}\left\{
\begin{array}{ll} {\phi}_i(x,\la)
=\(\[\rho(x)\]^{\frac{3}{4}}\[\sigma(x)\]^{\frac{1}{4}}\)^{-\frac{1}{2}}\exp\left\{\mu
w_i \int^x_0 \sqrt[4]{\frac{\rho(t)}{\sigma(t)}}
 dt\right\} [1],\\
~~\\
{\phi}_{i}^{(k)}(x,\la) = (\mu w_i)^{k}
\(\frac{\rho(x)}{\sigma(x)}\)^{\frac{k}{4}}
\(\[\rho(x)\]^{\frac{3}{4}}\[\sigma(x)\]^{\frac{1}{4}}\)^{-\frac{1}{2}}
\exp\left\{\mu w_i \int^x_0 \sqrt[4]{\frac{\rho(t)}{\sigma(t)}}
 dt\right\} [1],\end{array}
\right .\eean where ${\mu}^4=\la$, ${w_i}^4=1$,
${\phi}^{(k)}:=\frac{\partial^k{\phi}}{{\partial x^k}}$ for
$k\in\{1, 2, 3\}$, and
 $\[1\]=1 + \mathcal{O}(\mu^{-1})$ uniformly as $\mu\rightarrow\infty$ in a sector
$\mathcal{S}_\tau=\{\mu\in\C \hbox{~~such
that~~}0\leq\arg(\mu+\tau)\leq\frac{\pi}{4}\}$ where $\tau$ is any
fixed complex number. It is convenient to rewrite these asymptotes
in the form\bea
&&{\phi}_1(x,\la)=\zeta(x)\cos(\mu X)[1],\\
&&{\phi}_2(x,\la)=\zeta(x)\cosh(\mu X)[1],\\
&&{\phi}_3(x,\la)=\zeta(x)\sin(\mu X)[1],\\
&&{\phi}_4(x,\la)=\zeta(x)\sinh(\mu X)[1], \eea where \be
\zeta(x)=\(\[\rho(x)\]^{\frac{3}{4}}\[\sigma(x)\]^{\frac{1}{4}}\)^{-\frac{1}{2}}
\hbox{~~and~~} X=\int^x_0 \sqrt[4]{\frac{\rho(t)}{\sigma(t)}}
 dt.\label{c10}\ee Hence every solution $\phi(x,\la)$ of Equation
\eqref{ss} can be written in the following asymptotic form \bean
\phi(x,\la)=\zeta(x)\Big(C_1 \cos\(\mu X\) + C_2 \cosh\(\mu X\)+ C_3
\sin\(\mu X\)
+ C_4 \sinh\(\mu X\)\Big)\[1\]\label{sp15}
\eean and from \eqref{sp3}, we have also  \bean
\phi^{(k)}(x,\la)&=&\mu^k\zeta(x)
\(\frac{\rho(x)}{\sigma(x)}\)^{\frac{k}{4}}\Big{(}C_1 \cos^{(k)}(\mu
X) + C_2
\cosh^{(k)}(\mu X) + C_3 \sin^{(k)}(\mu X) \nonumber\\
 &&+ C_4 \sinh^{(k)}(\mu X)\Big{)}\[1\],~~\mbox{as}~~\mu \rightarrow\infty,~~
k\in\{1, 2, 3\}, \label{sp12} \eean
 where $C_i, i=1,2,3,4$ are constants. If $\phi(x,\la)$ satisfies
the boundary conditions \eqref{ss*1}, then by the asymptotics
\eqref{sp15} and \eqref{sp12}, we obtain for large positive $\mu$
the asymptotic estimate \bean \phi(x, \la) &=&
C_3\zeta(x)\(\sin\(\mu X\) - \frac{\sin\(\mu\ga\) \sinh\(\mu
X\)}{\sinh\(\mu\ga\)}\)
\[1\]\nonumber\\
          &=& \frac {C_3\zeta(x)}{\sinh\(\mu\ga\)}
          \Big(\sin\(\mu X\) \sinh\(\mu\ga\) -
          \sin\(\mu \ga\) \sinh\(\mu X\)\Big)\[1\],\label{sp9}
\eean and \bean \phi''(x, \la) = -\frac {C_3\mu^2
\zeta(x)}{\sinh\(\mu\ga\)}\(\frac{\rho(x)}{\sigma(x)}\)^{\frac{1}{2}}
          \Big(\sin\(\mu X\) \sinh\(\mu\ga\) +
          \sin\(\mu \ga\) \sinh\(\mu X\)\Big)\[1\],\label{sp50}
\eean where the constant $\ga$ is defined by \be \ga=\int^l_0
\sqrt[4]{\frac{\rho(t)}{\sigma(t)}}
 dt.\label{ga}\ee It is clear
that the eigenvalues $\la_n$ $(n\geq1)$ are the solutions of the
equation $\phi''(l,\la)=0$. Then by \eqref{sp50} , one gets the
following asymptotic characteristic equation
 $$-2\mu^2\zeta(l)\(\frac{\rho(l)}{\sigma(l)}\)^{1/2}\sin\(\mu \ga\) \[1 + \mathcal{O}\(\mu^{-1}\)\]=0,$$
 which can also be rewritten as
\be \sin(\mu \ga)+ \mathcal{O}\(\mu^{-{1}}\)=0.\label{sp5}\ee Since
the solutions of the equation $\sin(\mu \ga) = 0$ are given by
$\widetilde{\mu_{n}}=\frac{n\pi}{\ga},~~n=0,1,2,..., $ it follows
from Rouch\'{e}'s theorem that the solutions of \eqref{sp5} satisfy
the following asymptotic \be \mu_{n} = \widetilde{\mu_{n}} +
\delta_n = \frac{n\pi}{\ga} +\mathcal{O}(n^{-{1}}), ~~\hbox{as}~~n
\rightarrow\infty,\label{mu}\ee which proves \eqref{sp8}.
Furthermore, $\sqrt{\la_n}=
\(\frac{n\pi}{\ga}\)^{2}+\mathcal{O}\(1\),$ and hence
$$\sqrt{\la_{n+1}}-\sqrt{\la_n}\sim\mathcal{O}\(n\), ~~\hbox{as}~~n \rightarrow\infty.$$
The theorem is proved.
\end{Proof}\\
\begin{Prop}\label{Lem22}
The eigenfunctions $(\Phi_{n})_{n\geq1}$ of the spectral problem
\eqref{ss}-\eqref{s1} satisfy the following asymptotic estimates:
\be \Phi_{n}(x)=
\frac{\sqrt{2}\zeta(x)}{\|\zeta\|_{L^2_\rho(0,l)}}\sin
\left\{\frac{n\pi}{\int^l_0 \sqrt[4]
{\frac{\rho(t)}{\sigma(t)}}dt}\int^x_0 \sqrt[4]
{\frac{\rho(t)}{\sigma(t)}}dt\right\}+\mathcal{O}(n^{-1}),\label{sp13}\ee
Furthermore,  \be \displaystyle\lim_{n\rightarrow\infty} \Big{|}
{\la_n^{-\frac{1}{4}}}{\Phi_{n}'(l)}\Big{|}=\frac{\sqrt{2}\zeta(l)}{\|\zeta\|_{L^2_\rho(0,l)}}
\(\frac{\rho(l)}{\sigma(l)}\)^{\frac{1}{4}},\label{sp11}\ee
\begin{equation*}
\end{equation*}where
the function $\zeta$ is defined by \eqref{c10}.
\end{Prop}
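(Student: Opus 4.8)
The plan is to read the eigenfunction asymptotics \eqref{sp13} directly off the asymptotic representation of solutions already produced in the proof of Theorem~\ref{SP2}, and then to pin down the remaining multiplicative constant by imposing the $L^2_\rho$-normalization $\|\Phi_n\|_{L^2_\rho(0,l)}=1$. Recall from \eqref{sp9} that every solution of \eqref{ss} satisfying the boundary conditions \eqref{ss*1} has, for large $\mu$, the form
\be
\phi(x,\la)=C_3\zeta(x)\(\sin(\mu X)-\frac{\sin(\mu\ga)}{\sinh(\mu\ga)}\sinh(\mu X)\)[1],
\ee
with $\zeta$, $X$ and $\ga$ as in \eqref{c10}--\eqref{ga}. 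Specializing to an eigenvalue, that is to $\mu=\mu_n=\la_n^{1/4}$, the characteristic equation \eqref{sp5} gives $\sin(\mu_n\ga)=\mathcal{O}(n^{-1})$; since $0\leq X\leq\ga$ forces $0\leq\sinh(\mu_n X)/\sinh(\mu_n\ga)\leq1$ uniformly in $x$, the $\sinh$-term is $\mathcal{O}(n^{-1})$ uniformly on $[0,l]$. Combining this with $\mu_n X=\frac{n\pi}{\ga}X+\mathcal{O}(n^{-1})$ from \eqref{mu}, I obtain $\phi(x,\la_n)=C_3\zeta(x)\sin(n\pi X/\ga)+\mathcal{O}(n^{-1})$ uniformly in $x\in[0,l]$.

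It then remains to determine $C_3$. First I would compute
\be
\|\phi(\cdot,\la_n)\|^2_{L^2_\rho(0,l)}=C_3^2\int_0^l\zeta(x)^2\sin^2\(\frac{n\pi X}{\ga}\)\rho(x)\,dx+\mathcal{O}(n^{-1}),
\ee
and use $\sin^2=\tfrac12(1-\cos)$ to split the integral into its constant part $\tfrac12\|\zeta\|^2_{L^2_\rho(0,l)}$ and an oscillatory part weighted by $\cos(2n\pi X/\ga)$. The latter tends to $0$: substituting the phase variable $X$ and integrating by parts once—legitimate because $\rho,\si\in H^3(0,l)$ make the amplitude $\zeta^2\rho$ absolutely continuous—shows it is $\mathcal{O}(n^{-1})$. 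Imposing $\|\Phi_n\|_{L^2_\rho}=1$ then yields $|C_3|=\sqrt{2}\,\|\zeta\|_{L^2_\rho(0,l)}^{-1}(1+\mathcal{O}(n^{-1}))$, and substituting this back establishes \eqref{sp13}.

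For the boundary derivative I would differentiate using the companion asymptotics \eqref{sp12} with $k=1$, which at $x=l$ (where $X=\ga$) reads
\be
\Phi_n'(l)=\mu_n\zeta(l)\(\frac{\rho(l)}{\sigma(l)}\)^{1/4}C_3\(\cos(\mu_n\ga)-\sin(\mu_n\ga)\coth(\mu_n\ga)\)[1].
\ee
Since $\sin(\mu_n\ga)=\mathcal{O}(n^{-1})$ forces $|\cos(\mu_n\ga)|=1+\mathcal{O}(n^{-2})$, and $\coth(\mu_n\ga)\to1$, the bracketed factor has modulus $1+\mathcal{O}(n^{-1})$. Inserting the normalized value of $|C_3|$ and recalling $\mu_n=\la_n^{1/4}$ gives
\be
\la_n^{-1/4}|\Phi_n'(l)|=\frac{\sqrt{2}\,\zeta(l)}{\|\zeta\|_{L^2_\rho(0,l)}}\(\frac{\rho(l)}{\sigma(l)}\)^{1/4}\(1+\mathcal{O}(n^{-1})\),
\ee
and passing to the limit yields \eqref{sp11}. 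The main obstacle I anticipate is the bookkeeping of the factors $[1]=1+\mathcal{O}(\mu^{-1})$: they must be propagated consistently through the products, the normalization, and the differentiation so that every error genuinely collapses to order $n^{-1}$. In particular one must verify that the derivative of $\zeta$, which the representation \eqref{sp12} conceals inside $[1]$, contributes only at order $\mu^{-1}$, and that the Riemann--Lebesgue step delivers a quantitative $\mathcal{O}(n^{-1})$ correction to the normalization rather than a mere $o(1)$.
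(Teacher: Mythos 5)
Your proposal is correct and follows essentially the same route as the paper: both read \eqref{sp13} off the representation \eqref{sp9} evaluated at $\mu=\mu_n$, fix the constant via the normalization $\|\Phi_n\|_{L^2_\rho(0,l)}=1$, and obtain \eqref{sp11} from the $k=1$ case of \eqref{sp12} at $x=l$. You merely spell out the "simple computation" for the constant (the Riemann--Lebesgue step) and the bookkeeping of the $[1]$ factors, which the paper leaves implicit.
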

\begin{Proof} From  \eqref{sp9} and \eqref{mu},
we obtain the following asymptotic
 estimate for the eigenfunctions
 $\(\Phi_{n}\)_{n\geq1}$:
  \bea \Phi_{n}(x)=
C\zeta(x)\sin \left\{\frac{n\pi}{\ga}\int^x_0 \sqrt[4]
{\frac{\rho(t)}{\sigma(t)}}dt\right\}+\mathcal{O}(n^{-1})\hbox{ as }
n \rightarrow\infty,\eea where $C$ is a constant and $\ga$ is
defined by \eqref{ga}. Taking into account that
$\|\Phi_{n}\|_{L^2_\rho(0,l)} = 1$, a simple computation gives \be
C=\sqrt{2}\|\zeta\|^{-1}_{L^2_\rho(0,l)}+\mathcal{O}(n^{-1}),\label{can}\ee
which proves \eqref{sp13}.\\
In a similar way, from the asymptotics \eqref{sp12}, \eqref{mu} and
\eqref{can}, a straightforward computation yields \bea \Phi_{n}'(x)=
\sqrt{2}\|\zeta\|^{-1}_{L^2_\rho(0,l)}\zeta(x)
\(\frac{n\pi}{\ga}\)\(\frac{\rho(x)}{\sigma(x)}\)^{\frac{1}{4}}\cos
\left\{\frac{n\pi}{\ga}\int^x_0 \sqrt[4]
{\frac{\rho(t)}{\sigma(t)}}dt\right\}+\mathcal{O}(1), \eea and hence
\bea \mid\Phi_{n}'(l)\mid\sim
\sqrt{2}\|\zeta\|^{-1}_{L^2_\rho(0,l)}\zeta(l)
\(\frac{n\pi}{\ga}\)\(\frac{\rho(l)}{\sigma(l)}\)^{\frac{1}{4}},
\hbox{ as } n \rightarrow\infty. \eea
The proof is complete.
\end{Proof}
\section{Exact controllability for the Linear Problem}
The goal of this section is to prove the exact controllability for
the linear control problem \eqref{crepeau9}. To this end, we first
prove the observability results which are consequence of the
spectral properties given in Section \ref{Spe}. 
\begin{Prop}\label{ph1}
Let $T
>0$ and  $(y^0,{y}^1)\in H^1_0(0,l) \times
H^{-1}(0,l)$. Then
\begin{equation}\label{ob}
\int_{0}^{T}|{y}_x(t,l)|^{2}dt \asymp
\|({y}^0,{y}^1)\|_{H^1_0(0,l)\times H^{-1}(0,l)}^{2},
\end{equation}
where $y$ is the solution of Problem \eqref{crepeau10}.
\end{Prop}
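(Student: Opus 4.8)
The plan is to prove the observability estimate \eqref{ob} by expanding the solution $y$ of Problem \eqref{crepeau10} in the orthonormal eigenbasis $(\Phi_n)_{n\geq 1}$ supplied by Corollary \ref{cor}, and then reducing the estimate to a nonharmonic Fourier series inequality of Ingham--Haraux type. By Proposition \ref{pos}, the solution with data $(y^0,y^1)$ reads
\be
y(t,x)=\sum_{n\geq 1}\Big(a_n\cos(\sqrt{\la_n}\,t)+\frac{b_n}{\sqrt{\la_n}}\sin(\sqrt{\la_n}\,t)\Big)\Phi_n(x),
\ee
so that differentiating in $x$ and evaluating at $x=l$ gives
\be
y_x(t,l)=\sum_{n\geq 1}\Big(a_n\cos(\sqrt{\la_n}\,t)+\frac{b_n}{\sqrt{\la_n}}\sin(\sqrt{\la_n}\,t)\Big)\Phi_n'(l).
\ee
Rewriting the $\cos$ and $\sin$ in complex exponentials $e^{\pm i\sqrt{\la_n}\,t}$, this is a nonharmonic Fourier series with frequencies $\{\pm\sqrt{\la_n}\}_{n\geq 1}$ and coefficients proportional to $\Phi_n'(l)\,(a_n\mp i\,b_n/\sqrt{\la_n})$. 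The target is then to show that $\int_0^T|y_x(t,l)|^2\,dt$ is comparable, uniformly in the data, to $\sum_{n\geq 1}|\Phi_n'(l)|^2\big(|a_n|^2+|b_n|^2/\la_n\big)$.

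The key analytic input is the generalized Ingham inequality of Haraux \cite{AH}, which provides two-sided bounds $\int_0^T|\sum c_n e^{i\omega_n t}|^2\,dt\asymp\sum|c_n|^2$ for \emph{any} $T>0$, provided the frequencies satisfy a uniform gap condition $|\omega_{n+1}-\omega_n|\geq\gamma_0>0$ from some index on, together with finitely many exceptional close frequencies being handled by the extension. Here the relevant frequencies are $\pm\sqrt{\la_n}$; by Theorem \ref{SP2} we have $\sqrt{\la_{n+1}}-\sqrt{\la_n}=\mathcal{O}(n)$, and more precisely from \eqref{mu} the gap actually \emph{grows} like $n$, so the uniform gap condition is satisfied with room to spare (the frequencies are eventually widely separated). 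Simplicity of the $\la_n$ from Theorem \ref{Lem2} guarantees the $\pm\sqrt{\la_n}$ are distinct, and only a fixed finite number of low frequencies could cluster, which is exactly the situation Haraux's theorem accommodates. Thus the Ingham--Haraux inequality yields
\be
\int_0^T|y_x(t,l)|^2\,dt\asymp\sum_{n\geq 1}|\Phi_n'(l)|^2\Big(|a_n|^2+\frac{|b_n|^2}{\la_n}\Big).
\ee

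It remains to convert the right-hand side into the $H^1_0\times H^{-1}$ norm of $(y^0,y^1)$. The natural mechanism is the weight $|\Phi_n'(l)|^2$: by Proposition \ref{Lem22}, formula \eqref{sp11}, one has $\la_n^{-1/2}|\Phi_n'(l)|^2\to 2\zeta(l)^2\|\zeta\|_{L^2_\rho}^{-2}(\rho(l)/\sigma(l))^{1/2}$, a finite nonzero constant, so $|\Phi_n'(l)|^2\asymp\sqrt{\la_n}$ uniformly in $n$ (the finitely many low modes being controlled by the nonvanishing $\Phi_n'(l)\neq 0$ guaranteed by \eqref{sp30} of Theorem \ref{Lem2}). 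Substituting, the sum becomes comparable to $\sum_n\sqrt{\la_n}\big(|a_n|^2+|b_n|^2/\la_n\big)$. Since $\sqrt{\la_n}\asymp\la_n^{1/2}$, and recalling the spectral characterization of the interpolation spaces $\mathcal{H}_\theta$ via $\|u\|_\theta^2=\sum\la_n^{2\theta}|c_n|^2$, one identifies $\sum\la_n^{1/2}|a_n|^2\asymp\|y^0\|_{\mathcal{H}_{1/4}}^2$ and $\sum\la_n^{-1/2}|b_n|^2\asymp\|y^1\|_{\mathcal{H}_{-1/4}}^2$. The plan's final step is therefore to verify that $\mathcal{H}_{1/4}=H^1_0(0,l)$ and $\mathcal{H}_{-1/4}=H^{-1}(0,l)$ with equivalent norms; this is an interpolation identification between the domain scale of $\mathcal{A}$ and the standard Sobolev scale.

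I expect the main obstacle to be precisely this last identification of the fractional power space $\mathcal{H}_{1/4}=\mathcal{D}(\mathcal{A}^{1/4})$ with $H^1_0(0,l)$ (and its dual for the negative index). Since $\mathcal{A}$ is a fourth-order operator, $\mathcal{D}(\mathcal{A})\subset H^4$ and $\mathcal{D}(\mathcal{A}^{1/2})=H^2\cap H^1_0$, one anticipates that the quarter power lands on the $H^1$ scale; but making this rigorous requires either an interpolation argument ($[L^2_\rho,\mathcal{D}(\mathcal{A}^{1/2})]_{1/2}=H^1_0$ with equivalent norms) or a direct verification that the eigenvalue-weighted norm $\sum\la_n^{1/2}|a_n|^2$ is equivalent to $\sum_n\|\Phi_n\|_{H^1}^2|a_n|^2$, and care is needed because the variable coefficients $\rho,\sigma,q$ enter both the weighted $L^2_\rho$ inner product and the eigenfunction asymptotics. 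Handling the negative-order space $H^{-1}$ by duality, and checking that the boundary-term comparison $|\Phi_n'(l)|^2\asymp\sqrt{\la_n}$ is genuinely uniform rather than merely asymptotic (which is where the nonvanishing conclusion \eqref{sp30} for \emph{every} $n$ becomes essential, not just the limit), are the delicate points where the variable-coefficient analysis of the preceding sections must be invoked with full strength.
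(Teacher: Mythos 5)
Your proposal is correct and follows essentially the same route as the paper: expand $y_x(t,l)$ as a nonharmonic Fourier series in the frequencies $\pm\sqrt{\la_n}$, invoke Haraux's variant of Ingham's theorem via the simplicity of the eigenvalues (Theorem \ref{Lem2}) and the gap asymptotics \eqref{g1}, and convert the weighted coefficient sum into the $H^1_0\times H^{-1}$ norm using the two-sided bound $|\Phi_n'(l)|\asymp\la_n^{1/4}$ (from \eqref{sp30} and \eqref{sp11}) together with the identifications $\mathcal{H}_{1/4}=H^1_0(0,l)$ and $\mathcal{H}_{-1/4}=H^{-1}(0,l)$. The only difference is one of emphasis: you flag the identification of the fractional power spaces with the Sobolev scale as the delicate step, whereas the paper simply asserts it from the eigenvalue and eigenfunction asymptotics.
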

In order to prove Proposition \ref{ph1}, we need the following
variant of Ingham's inequality due to Haraux \cite{AH}.
\begin{Lem}\cite{AH}\label{In}
Let $f(t)=\sum\limits_{n\in\mathbb{Z}}c_ne^{-i\tilde{\la}_nt}$,
where $(\tilde{\la}_n)_{n\in\mathbb{Z}}$ is a sequence of real
numbers. We assume that there exist $N\in\mathbb{N}$, $\beta> 0$ and
$\varrho> 0$ such that \be\label{in2}
|\tilde{\la}_{n+1}-\tilde{\la}_n|>\varrho,~~\hbox{if }|n|>N,\ee and
$|\tilde{\la}_{n+1}-\tilde{\la}_n|>\beta$, for all $n\in\mathbb{Z}$.
Then for any $T>\dfrac{\pi}{\varrho}$,
\begin{equation*}
\int_{0}^{T}|f(t)|^{2}dt \asymp\sum\limits_{n\in
\mathbb{Z}}|c_{n}|^{2},
\end{equation*}
for all sequences of complex numbers $(c_n)_{n\in\mathbb{Z}}\in
{\ell}^2$.
\end{Lem}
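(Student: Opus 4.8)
The plan is to establish the two-sided bound as two separate inequalities, the direct one being routine and the inverse one being the substance of the statement. The direct inequality $\int_0^T|f(t)|^2\,dt\le C\sum_{n\in\Z}|c_n|^2$ uses only the global separation $|\tilde\la_{n+1}-\tilde\la_n|>\beta$: under a uniform gap the family $\{e^{-i\tilde\la_n t}\}_{n\in\Z}$ is a Bessel sequence in $L^2(0,T)$ for every $T$, which is Ingham's classical upper estimate. I would quote it (or reproduce the kernel computation) and move on; the hypothesis $T>\pi/\varrho$ plays no role here.

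The whole difficulty is the inverse inequality $\int_0^T|f|^2\,dt\ge c\sum_{n\in\Z}|c_n|^2$, and here the idea is to isolate the finitely many frequencies for which only the weak gap $\beta$ is available. Write $f=f_{\mathrm{low}}+f_{\mathrm{high}}$ with $f_{\mathrm{low}}=\sum_{|n|\le N}c_n e^{-i\tilde\la_n t}$ and $f_{\mathrm{high}}=\sum_{|n|>N}c_n e^{-i\tilde\la_n t}$. The consecutive frequencies entering $f_{\mathrm{high}}$ are separated by more than $\varrho$ (the single gap bridging the removed block $|n|\le N$ is larger still, since it contains at least one $\varrho$-gap), so the classical Ingham inequality applies to this subfamily: in the normalization under which $T>\pi/\varrho$ is the admissible threshold, there are constants $0<c_1\le C_1$ with $c_1\sum_{|n|>N}|c_n|^2\le\int_0^T|f_{\mathrm{high}}|^2\,dt\le C_1\sum_{|n|>N}|c_n|^2$. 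In particular $\{e^{-i\tilde\la_n t}\}_{|n|>N}$ is a Riesz sequence in $L^2(0,T)$, with closed span $H$.

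To pass from the high-frequency block to the whole family I would argue by contradiction and compactness. If the inverse inequality failed there would exist $c^{(k)}\in\ell^2$ with $\|c^{(k)}\|_{\ell^2}=1$ and $\|f_k\|_{L^2(0,T)}\to0$. Splitting $c^{(k)}=(a^{(k)},b^{(k)})$ into its low and high parts, the low part ranges in the finite-dimensional space $\C^{2N+1}$, so along a subsequence $a^{(k)}\to a$ and hence $f_{\mathrm{low},k}\to f_{\mathrm{low}}:=\sum_{|n|\le N}a_n e^{-i\tilde\la_n t}$ in $L^2(0,T)$. Then $f_{\mathrm{high},k}=f_k-f_{\mathrm{low},k}\to -f_{\mathrm{low}}$, so $(f_{\mathrm{high},k})_k$ is Cauchy, and the lower Ingham bound on the high block gives $\|b^{(k)}-b^{(j)}\|_{\ell^2}^2\le c_1^{-1}\|f_{\mathrm{high},k}-f_{\mathrm{high},j}\|_{L^2}^2\to0$, so $b^{(k)}\to b$ in $\ell^2$. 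Passing to the limit yields $\sum_{n\in\Z}c_n e^{-i\tilde\la_n t}=0$ in $L^2(0,T)$ with $c=(a,b)$ and $\|c\|_{\ell^2}=1$.

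The step I expect to be the genuine obstacle is ruling out this nontrivial null combination, i.e. showing that $\Lambda:c\mapsto\sum_n c_n e^{-i\tilde\la_n t}$ is injective on $\ell^2$, equivalently that the finite-dimensional space $V=\mathrm{span}\{e^{-i\tilde\la_n t}:|n|\le N\}$ meets the Riesz span $H$ only at $0$. This does not follow from the high-frequency estimate alone, since a low-frequency exponential could a priori lie in the $L^2(0,T)$-closure of the high ones; it is precisely the minimality of the exponential system $\{e^{-i\tilde\la_n t}\}_{n\in\Z}$ in $L^2(0,T)$. I would derive it from the theory of nonharmonic Fourier series: the finitely many indices $|n|\le N$ do not affect the upper density of the frequency set, which is governed by the asymptotic gap $\varrho$, so under the present hypotheses on $T$ the system is minimal and possesses a biorthogonal family $\{\psi_m\}\subset L^2(0,T)$; then $\Lambda c=0$ forces $c_m=\langle\Lambda c,\psi_m\rangle=0$ for every $m$. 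With injectivity in hand the limiting relation $\Lambda c=0$, $\|c\|_{\ell^2}=1$ is absurd, so the inverse inequality holds, and together with the direct inequality it gives the asserted equivalence for every admissible $T$.
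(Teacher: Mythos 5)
The paper contains no proof of this lemma at all: it is quoted from Haraux \cite{AH}, so the only meaningful comparison is with Haraux's original argument. That argument is an induction on the finitely many exceptional indices $|n|\le N$: classical Ingham handles the $\varrho$-separated part, and an elementary averaging operator (of the type $f\mapsto\frac{1}{\epsilon}\int_0^\epsilon f(t+s)\,e^{i\tilde\la_m s}\,ds$, which isolates or annihilates the $m$-th exponential while perturbing the other coefficients in a controlled way) adjoins one exceptional frequency at a time, at the price of an arbitrarily small increase in the length of the time interval. Your proof shares the low/high splitting and the use of classical Ingham on the high block, but replaces the induction by a compactness--uniqueness argument; that reduction, including the Cauchy-sequence trick on the high coefficients and the passage to a nontrivial null series, is correct and complete as written.

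The weight of your proof therefore falls on the uniqueness step (minimality of the full family), and there are two caveats there. First, the clean quotable theorem behind your density argument is Beurling's interpolation theorem: a $\beta$-separated real sequence with upper Beurling density $D^+<T/(2\pi)$ generates a Riesz sequence in $L^2(0,T)$. Since your full sequence is $\beta$-separated with $D^+\le 1/\varrho$ (the finitely many small gaps indeed do not affect $D^+$), this theorem already yields the whole two-sided inequality in one stroke, so the splitting-plus-compactness scaffolding is logically redundant --- not wrong, but the hard content then sits entirely inside the citation, exactly as it does in the paper's bare reference to \cite{AH}. Second, what this citation (and, in the same normalization, classical Ingham on the high block) delivers is the threshold $T>2\pi/\varrho$, not the printed $T>\pi/\varrho$, and no reading of the statement as written (interval of length $T$, exponentials $e^{-i\tilde\la_n t}$) can achieve $\pi/\varrho$: take $\tilde\la_n=n$, $\beta=\varrho=1$, and $\pi<T<2\pi$; expanding any nonzero $g\in L^2(0,2\pi)$ supported in $[T,2\pi]$ in the basis $\{e^{-int}\}$ gives a nonzero $\ell^2$ sequence whose series vanishes in $L^2(0,T)$, so the lower bound fails. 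This factor of two is a defect of the paper's transcription of Haraux's theorem rather than of your argument (and is harmless in the application, where the spectral gaps tend to infinity so any $T>0$ is admissible), but it means your hedge about ``the normalization under which $T>\pi/\varrho$ is the admissible threshold'' cannot actually be cashed in; the honest conclusion of your proof is the statement with $2\pi/\varrho$.
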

\begin{proof} {\bf of Proposition \ref{ph1}.} Let us first recall
from the spectral representation of the space $\mathcal{H}_{\theta}$
that
$$\mathcal{H}_{1/4}=
\{u(x)=\sum\limits_{n\in\N\backslash\{0\}}c_n\Phi_n(x)~:
 ~\|u\|_{\theta}^2=\sum\limits_{n\in\N\backslash\{0\}}\la_n^{1/2}|c_n|^{2}<\infty\}$$
 and
 $$\mathcal{H}_{-1/4}=
\{u(x)=\sum\limits_{n\in\N\backslash\{0\}}c_n\Phi_n(x)~:
 ~\|u\|_{\theta}^2=\sum\limits_{n\in\N\backslash\{0\}}\la_n^{-1/2}|c_n|^{2}<\infty\},$$
where the eigenfunctions $\(\Phi_{n}\)_{n\geq1}$ are defined in
Corollary \ref{cor}. Since the coefficients $\rho$, $\si$ and $q$
are bounded in $\[0,l\]$, then from the asymptotic estimates
\eqref{sp8} and \eqref{sp13}, one gets
$$\mathcal{H}_{1/4}=H_0^1\(0,l\)~\hbox{ and }~\mathcal{H}_{-1/4}=H^{-1}\(0,l\).$$
On the other hand, by \eqref{sl}, one has  \bea
\int_{0}^{T}|{y}_x(t,l)|^{2}dt =
\int_{0}^{T}\Big{|}\sum\limits_{n\in \mathbb{Z}\backslash\{0\}}
c_{n}e^{-i\tilde{\la}_n t} \Phi_n'\(l\)\Big{|}^{2},\eea where
$\tilde{\la}_{n}=\sqrt{\lambda_{n}},$ and
\bean\label{ob1000}\left\{
\begin{array}{ll}
\tilde{\la}_{-n}=-\tilde{\la}_{n}\hbox{~and } \Phi_n=\Phi_{-n},~
n\in
\mathbb{N}\backslash\{0\},\\
c_n=\overline{c_{-n}}=\frac{1}{2}\Big(a_n-i\frac{b_n}{\sqrt{\tilde\la_n}}\Big),~n\in
\mathbb{N}\backslash\{0\}.
\end{array}
 \right.
\eean Therefore, in view of the first statement of Theorem
\ref{Lem2} and the gap condition \eqref{g1}, Lemma \ref{In} implies
that for every $T>0$ \be \int_{0}^{T}|{y}_x(t,l)|^{2}dt
\asymp\sum\limits_{n\in \mathbb{Z}\backslash\{0\}}
|c_{n}\Phi'_n(l)|^2= \frac{1}{2}\(\sum\limits_{n\in
\mathbb{N}\backslash\{0\}}|a_n \Phi'_n(l)|^{2}+\Big{|}
\frac{b_n}{\sqrt{\la_n}}\Phi'_n(l)\Big{|}^{2}\).\label{sl1}\ee 
Furthermore, by the second statement of Theorem \ref{Lem2}, we have
$\Phi'_n(l)\not\equiv 0$ for all $n\in \mathbb{N}\backslash\{0\}$.
Thus by \eqref{sp11}, there exists $m,~ M > 0$ such that $$m
\sqrt[4]{\la_n}<|\Phi'_n(l)|<M \sqrt[4]{\la_n}.$$ Consequently,
$$\int_{0}^{T}|{y}_x(t,l)|^{2}dt \asymp
\sum\limits_{n\in \mathbb{N}\backslash\{0\}}\sqrt{\la_n}|a_n|^2+
\sum\limits_{n\in
\mathbb{N}\backslash\{0\}}\frac{|b_n|^2}{\sqrt{\la_n}}=
\|({y}^0,{y}^1)\|_{H^1_0(0,l)\times H^{-1}(0,l)}^{2},$$ which proves
\eqref{ob}. 
 This completes the proof.

\end{proof}\\
Let us now state the existence and uniqueness result for the control
system \eqref{crepeau9}. Following \cite[Theorem 2.14]{V.K} and
\cite{C}, we define a weak solution to the control system
\eqref{crepeau9} using the method of transposition.
\begin{Prop} \label{b-p} Let
$T>0$, and  $u\in L^{2}(0,T)$. For any $(y^0, y^1)\in H^1_{0}(0,l)
\times H^{-1}(0,l)$, there exists a unique weak solution $y$ of
System \eqref{crepeau9} in the class
$$(y,y_t)\in C\([0, T ]; H^1_{0}(0,l) \times H^{-1}(0,l)\).$$
Moreover, there exists a constant $C>0$ such that
$$\|(y,y_t)\|_{H^1_{0}(0,l) \times H^{-1}(0,l)}\leq
C\(\|(y^0,y^1)\|_{H^1_{0}(0,l) \times
H^{-1}(0,l)}+\|u\|_{L^{2}(0,T)}\).$$\end{Prop}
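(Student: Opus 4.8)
The plan is to construct the solution by the method of transposition, taking as test functions the solutions of the uncontrolled adjoint system \eqref{crepeau10} and using the hidden regularity estimate of Proposition \ref{ph1} to control the boundary term that carries the control $u$. Accordingly, I would fix adjoint data $(\varphi^0,\varphi^1)\in H^1_0(0,l)\times H^{-1}(0,l)$ and let $\varphi$ be the solution of \eqref{crepeau10} with terminal data $\varphi(T,\cdot)=\varphi^0$, $\varphi_t(T,\cdot)=\varphi^1$; by Proposition \ref{pos} with $\theta=\frac14$ (so that $\mathcal{H}_{1/4}=H^1_0(0,l)$ and $\mathcal{H}_{-1/4}=H^{-1}(0,l)$) one has $\varphi\in C([0,T];H^1_0(0,l))\cap C^1([0,T];H^{-1}(0,l))$. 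Multiplying the equation in \eqref{crepeau9} by $\varphi$ and integrating over $(0,T)\times(0,l)$, I would integrate by parts twice in $x$ and twice in $t$. The $q$-term contributes no boundary value since $\varphi=y=0$ at both endpoints, while the fourth-order term, after using $y(t,l)=0$, $\si(l)y_{xx}(t,0)=0$ together with the homogeneous conditions $\varphi(0)=\varphi(l)=\si(l)\varphi_{xx}(0)=\si(l)\varphi_{xx}(l)=0$ of the adjoint problem, leaves the single surviving boundary term $-u(t)\varphi_x(t,l)$. The interior integrals $\iint \rho\, y\,\varphi_{tt}$ cancel, yielding the transposition identity
\begin{equation*}
\int_0^l \rho\big(y_t(T)\varphi^0 - y(T)\varphi^1\big)\,dx = \int_0^T u(t)\varphi_x(t,l)\,dt + \int_0^l\rho\big(y^1\varphi(0)-y^0\varphi_t(0)\big)\,dx,
\end{equation*}
which I would adopt as the definition of the pair $(y(T),y_t(T))$.

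Next I would check that the right-hand side is a bounded linear functional of $(\varphi^0,\varphi^1)$ on $H^1_0(0,l)\times H^{-1}(0,l)$. The two interior pairings are estimated by $\|(y^0,y^1)\|_{H^1_0\times H^{-1}}\,\|(\varphi(0),\varphi_t(0))\|_{H^1_0\times H^{-1}}$, which by the well-posedness of \eqref{crepeau10} (Proposition \ref{pos}) is $\lesssim \|(y^0,y^1)\|_{H^1_0\times H^{-1}}\,\|(\varphi^0,\varphi^1)\|_{H^1_0\times H^{-1}}$. The boundary term is precisely where Proposition \ref{ph1} enters: its direct inequality gives $\|\varphi_x(\cdot,l)\|_{L^2(0,T)}\lesssim \|(\varphi^0,\varphi^1)\|_{H^1_0\times H^{-1}}$, so by Cauchy--Schwarz $\big|\int_0^T u\,\varphi_x(\cdot,l)\,dt\big|\le \|u\|_{L^2(0,T)}\|\varphi_x(\cdot,l)\|_{L^2(0,T)}\lesssim \|u\|_{L^2(0,T)}\|(\varphi^0,\varphi^1)\|_{H^1_0\times H^{-1}}$. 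Hence the functional has norm $\lesssim \|(y^0,y^1)\|_{H^1_0\times H^{-1}}+\|u\|_{L^2(0,T)}$. Since its left-hand side is exactly the duality pairing of $(y_t(T),y(T))\in H^{-1}(0,l)\times H^1_0(0,l)$ against $(\varphi^0,\varphi^1)$, the Riesz representation theorem produces a unique $(y(T),y_t(T))\in H^1_0(0,l)\times H^{-1}(0,l)$ satisfying the stated estimate.

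To upgrade this to a genuine solution in $C([0,T];H^1_0(0,l))\cap C^1([0,T];H^{-1}(0,l))$, I would repeat the construction with $T$ replaced by an arbitrary $t\in[0,T]$ (running the adjoint problem on $[0,t]$), obtaining $(y(t),y_t(t))$ with a bound uniform in $t$, and then establish continuity in $t$ by a density argument: approximate $u$ in $L^2(0,T)$ by smooth controls $u_k$ for which \eqref{crepeau9} admits strong solutions $y_k$ continuous in time in the energy spaces, and pass to the limit using the uniform estimate applied to $y-y_k$. Uniqueness is immediate from the transposition identity, since a solution with $u\equiv0$ and $(y^0,y^1)=(0,0)$ pairs to zero against every $(\varphi^0,\varphi^1)$ and hence vanishes. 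Alternatively, the whole scheme fits the abstract framework of \cite[Theorem 2.14]{V.K} and \cite{C}, which could be cited to shorten the exposition.

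The hard part will be making the integration by parts rigorous and correctly identifying the one surviving boundary term $-u(t)\varphi_x(t,l)$, together with ensuring that the trace $\varphi_x(\cdot,l)$ is meaningful and square-integrable — this is exactly the hidden-regularity content furnished by Proposition \ref{ph1}. Once that trace estimate is granted, the remaining steps are the routine transposition and density machinery.
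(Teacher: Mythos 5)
Your transposition argument is correct and is exactly the approach the paper intends: the paper offers no written proof of Proposition \ref{b-p}, merely citing \cite[Theorem 2.14]{V.K} and \cite[Proposition 10]{C} for the constant-coefficient case, and those references carry out precisely the duality scheme you describe, with the direct half of the inequality in Proposition \ref{ph1} supplying the hidden regularity of the trace $\varphi_x(\cdot,l)$ needed to make the boundary pairing with $u\in L^2(0,T)$ meaningful. The only blemish is a harmless bookkeeping slip: in the prose you call the surviving boundary term $-u(t)\varphi_x(t,l)$, while the displayed identity (correctly) carries $+\int_0^T u(t)\varphi_x(t,l)\,dt$ on the right-hand side.
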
 This result is
basically well known for $\rho=\si=q=1$ (see \cite[Proposition
10]{C}). The proof can be also easily extended to the variable
coefficient case.\\
We are now ready to state the main result of this section. Notice
that, in view of the fact that \eqref{crepeau9} is linear and
reversible in time, this system is exactly controllable if and only
if the system is null controllable.
\begin{Theo}
\label{hj} Assume that the coefficients $\rho$, $\sigma$ and $q$
satisfy \eqref{crepeau7} and \eqref{crepeau8}. Given $T>0$ and
 $(y^0,y^1)\in H^{-1}(0,l)\times H^1_0(0,l)$, there
exists a control $u\in L^{2}(0,T)$ such that the solution $y$ of the
control problem \eqref{crepeau9}
 satisfies
\begin{equation*}
y(T,x)=y_t(T,x)=0,~~x\in\[0,l\].
\end{equation*}
\end{Theo}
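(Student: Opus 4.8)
The plan is to prove exact controllability via the Hilbert Uniqueness Method (HUM), which reduces the controllability problem to the observability inequality already established in Proposition~\ref{ph1}. Since the system \eqref{crepeau9} is linear and reversible in time, exact controllability is equivalent to null controllability, so it suffices to drive an arbitrary initial state $(y^0,y^1)\in H^1_0(0,l)\times H^{-1}(0,l)$ to rest at time $T$. (I note that the theorem statement writes the data space as $H^{-1}(0,l)\times H^1_0(0,l)$, but by the reversibility remark and Proposition~\ref{ph1} the natural space is $H^1_0(0,l)\times H^{-1}(0,l)$; I would work in the latter.)

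First I would introduce the adjoint (homogeneous, uncontrolled) system \eqref{crepeau10}, which is self-adjoint and time-reversible, and for final data $(\varphi^0,\varphi^1)$ let $\varphi$ denote its solution, given by the Fourier series in Proposition~\ref{pos}. The key object is the observation $\varphi_x(t,l)$ at the controlled endpoint. By Proposition~\ref{ph1}, the map $(\varphi^0,\varphi^1)\mapsto \int_0^T|\varphi_x(t,l)|^2\,dt$ defines a norm on $H^1_0(0,l)\times H^{-1}(0,l)$ equivalent to the natural one, for every $T>0$. I would then define the HUM operator $\Lambda$ by solving the backward controlled problem with control $u(t)=\varphi_x(t,l)$ (the precise trace being justified by Proposition~\ref{b-p} and the transposition framework), and show that $\Lambda$ is an isomorphism from the data space onto its dual. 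The observability inequality \eqref{ob} is exactly the coercivity estimate $\langle\Lambda(\varphi^0,\varphi^1),(\varphi^0,\varphi^1)\rangle\asymp\|(\varphi^0,\varphi^1)\|^2$ needed to invoke Lax--Milgram.

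Concretely, I would expand everything in the eigenbasis $(\Phi_n)_{n\geq 1}$. Writing $y$ and $\varphi$ as Fourier series, the controllability problem becomes a moment problem: find $u\in L^2(0,T)$ whose moments against the family $\{e^{\pm i\sqrt{\la_n}t}\Phi_n'(l)\}$ match the Fourier coefficients of the target data. The solvability of this moment problem is guaranteed by the Riesz-basis (or biorthogonal-family) property of the exponentials $\{e^{\pm i\sqrt{\la_n}t}\}$ in $L^2(0,T)$, which follows from the uniform gap condition \eqref{g1} together with Haraux's variant of Ingham's inequality (Lemma~\ref{In}). The nonvanishing of $\Phi_n'(l)$, together with the two-sided bound $m\sqrt[4]{\la_n}<|\Phi_n'(l)|<M\sqrt[4]{\la_n}$ from \eqref{sp11}, ensures the resulting $u$ lies in $L^2(0,T)$ with the correct norm, so no eigenfrequency is ``uncontrollable.''

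The main obstacle, which Proposition~\ref{ph1} has in fact already cleared, is precisely the observability inequality: establishing the equivalence between $\int_0^T|\varphi_x(t,l)|^2\,dt$ and the $H^1_0\times H^{-1}$-norm of the data requires both the spectral gap estimate and the nonvanishing/asymptotic behavior of $\Phi_n'(l)$. Given these, the remaining work is the essentially routine HUM/duality argument: define $\Lambda$, verify it is continuous, symmetric and coercive via \eqref{ob}, apply Lax--Milgram to invert it, and read off the control $u=\varphi_x(\cdot,l)$ where $\varphi$ solves the adjoint problem with final data $\Lambda^{-1}$ applied to $(y^1,-y^0)$ (the sign and ordering coming from the duality pairing). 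Validity for \emph{every} $T>0$ is inherited directly from Lemma~\ref{In}, whose hypothesis $T>\pi/\varrho$ is vacuous here because the gap grows like $\mathcal{O}(n)\to\infty$, so any positive $\varrho$ and hence any $T>0$ is admissible.
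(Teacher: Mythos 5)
Your proposal is correct and takes essentially the same route as the paper: the paper's own proof simply invokes Lions' HUM (following \cite[Proposition 11]{C}) to reduce exact controllability to the observability inequality \eqref{ob} of Proposition \ref{ph1}, which is exactly the reduction you carry out (with the additional, harmless detail of the Lax--Milgram/moment-problem mechanics). Your remark that the data space in the theorem statement should read $H^1_0(0,l)\times H^{-1}(0,l)$ rather than $H^{-1}(0,l)\times H^1_0(0,l)$ is also well taken.
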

\begin{proof} 
 Following \cite[Proposition 11]{C}, we apply the
Lions'{\rm HUM} \cite{J.L2}, then the control problem is reduced to
the obtention of the observability inequalities \eqref{ob} for the
uncontrolled system \eqref{crepeau10}. Therefore, Theorem \ref{hj}
immediately follows from Proposition \ref{ph1}.

\end{proof}
\section{Controllability for the Nonlinear Problem}
In this section we prove the local exact controllability for the
nonlinear control problem \eqref{crepeau6}. First of all, we
introduce the following space: \be\mathcal{H}=\{y\in H^3(0,l)\hbox{
such that } y(0) =y_{xx}(0) = y(l)=0\}.\ee The main result of this
section is stated as follows:
\begin{Theo}\label{nont}
Let $T>0$ and assume that the coefficients $\rho$, $\sigma$ and $q$
satisfy \eqref{crepeau7} and \eqref{crepeau8}. Then there exists a
constant $r > 0$ such that for all initial and final conditions $
\(y^{0}, y^{1}\)$, $\(y_T^0,y_T^1\) \in\mathcal{H}\times
H^1_0(0,l)$, with $$\|\(y^{0}, y^{1}\)\|_{\mathcal{H}\times
H^1_0(0,l)} < r\hbox{ and }\|\(y_T^0,y_T^1\)\|_{\mathcal{H}\times
H^1_0(0,l)} < r,$$ there exists a control $u \in H^1(0, T)$ such
that the solution $y\in C\([0, T];H^3(0,l)\)\cap C^1\([0,
T];H^1_0(0,l)\)$ of Problem \eqref{crepeau6} satisfies  \be y(T, x)=
y_T^0,~~y_{t}(T,x)= y_T^1, ~~ x\in\[0,l\]. \ee
\end{Theo}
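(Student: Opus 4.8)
The plan is to recast the nonlinear controlled system \eqref{crepeau6} as the linear system \eqref{crepeau9} driven by the quadratic source $-(y^2)_{xx}$, and then to close the loop by a fixed-point argument, exactly in the spirit of \cite{C}. Writing $\mathcal{A}$ for the operator of Section~2, the equation becomes $y_{tt}+\mathcal{A}y=g(y)$ with $g(y)=-\rho^{-1}(y^2)_{xx}$, supplemented by the boundary control $\si(l)y_{xx}(t,l)=u(t)$ and by the prescribed initial and final data in $\mathcal{H}\times H^1_0(0,l)$. First I would fix a candidate $z$ in a small ball of the solution space $\mathcal{X}:=C([0,T];H^3(0,l))\cap C^1([0,T];H^1_0(0,l))$, freeze the nonlinearity by setting the source to $g(z)$, and invoke an inhomogeneous version of the linear controllability result to produce a control $u_z$ and a state $y=\mathcal{N}(z)\in\mathcal{X}$ steering $(y^0,y^1)$ to $(y_T^0,y_T^1)$. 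A fixed point of $\mathcal{N}$ is then a solution of the nonlinear problem.

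The crucial intermediate step is to upgrade Theorem~\ref{hj} to a controllability statement with a right-hand side, at one higher level of regularity. Since $H^3(0,l)=\mathcal{H}_{3/4}$ and $H^1_0(0,l)=\mathcal{H}_{1/4}$ in the scale of Section~2, the target phase space $\mathcal{H}\times H^1_0$ is obtained from the space $H^1_0\times H^{-1}$ of Theorem~\ref{hj} by applying $\mathcal{A}^{1/2}$, which corresponds to requiring one extra time derivative on the control, i.e. $u\in H^1(0,T)$. Concretely, I would revisit the observability inequality of Proposition~\ref{ph1}, now weighting the Fourier coefficients by $\la_n^{1/2}$; the same two ingredients—the uniform spectral gap $\sqrt{\la_{n+1}}-\sqrt{\la_n}=\mathcal{O}(n)$ from Theorem~\ref{SP2} and the nonvanishing boundary trace $|\Phi_n'(l)|\asymp\la_n^{1/4}$ from Proposition~\ref{Lem22}—feed Haraux's inequality (Lemma~\ref{In}) to yield the smoother observability estimate. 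Applying Lions' HUM at this level, together with Duhamel's formula for the source, gives a bounded linear right-inverse: for data in $\mathcal{H}\times H^1_0$ and source $g$ in a suitable space there exist $u\in H^1(0,T)$ and $y\in\mathcal{X}$ reaching the target with
\begin{equation*}
\|u\|_{H^1(0,T)}+\|(y,y_t)\|_{C([0,T];\mathcal{H}\times H^1_0)}\le C\big(\|(y^0,y^1)\|_{\mathcal{H}\times H^1_0}+\|(y_T^0,y_T^1)\|_{\mathcal{H}\times H^1_0}+\|g\|\big).
\end{equation*}

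With this linear estimate in hand, the fixed-point step is routine. Because $H^3(0,l)$ is a Banach algebra and $z(t,\cdot)\in H^3$ for $z\in\mathcal{X}$, one has $\|g(z)\|\le C\|z\|_{\mathcal{X}}^2$ together with the Lipschitz bound $\|g(z_1)-g(z_2)\|\le C(\|z_1\|_{\mathcal{X}}+\|z_2\|_{\mathcal{X}})\|z_1-z_2\|_{\mathcal{X}}$, the nonlinearity being quadratic. Combining these with the linear estimate, for $r$ and the ball radius $\delta$ small enough the map $\mathcal{N}$ sends the ball $\{\|z\|_{\mathcal{X}}\le\delta\}$ into itself and is a strict contraction there; the Banach fixed-point theorem then furnishes the sought solution $y$ and control $u\in H^1(0,T)$.

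The main obstacle is the higher-regularity inhomogeneous controllability of the second paragraph, and within it the treatment of the source $g(z)=-\rho^{-1}(z^2)_{xx}$. Since $(z^2)_{xx}=2z_x^2+2zz_{xx}$ does not vanish at the endpoints—because $z_x(t,0)$ and $z_x(t,l)$ are generically nonzero even though $z(t,0)=z(t,l)=0$—the source $g(z)$ carries nonzero boundary traces and does not lie in $H^1_0(0,l)$. Reconciling this mismatch with the phase space $\mathcal{H}\times H^1_0$ is the delicate point: following \cite{C}, it is handled by formulating the inhomogeneous problem by transposition, where the troublesome boundary terms are integrated against the adjoint state—whose structure $\phi(0)=\phi(l)=\phi_{xx}(0)=0$ annihilates the worst contributions—so that the source is effectively seen at the correct dual level. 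Once this is set up, the weighted observability inequality closes the argument and the fixed point is obtained as above.
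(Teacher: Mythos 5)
Your proposal follows essentially the same route as the paper: freeze the quadratic source, use the higher-regularity linear controllability in $\mathcal{H}\times H^1_0(0,l)$ with $u\in H^1(0,T)$ (the paper's Proposition \ref{propH}, obtained as you suggest by adapting the observability/HUM argument one level up the scale, following Cr\'epeau), combine it with the inhomogeneous solution map for the source $(y^2)_{xx}\in L^1(0,T;H^1(0,l))$ and the quadratic Lipschitz estimate for $y\mapsto(y^2)_{xx}$, and close with the Banach fixed-point theorem on a small ball. The only cosmetic difference is that the paper runs the contraction in $L^2(0,T;\mathcal{H})$ via an explicitly assembled map $\mathfrak{F}=\psi_0-\psi_2+\psi_1\circ\Gamma(\cdots)$ rather than in $C([0,T];H^3)\cap C^1([0,T];H^1_0)$, and it sidesteps your boundary-trace worry by only requiring the source to land in $H^1(0,l)$, not $H^1_0(0,l)$.
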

In order to prove Theorem \ref{nont}, we need the following result
whose the proof is similar to that \cite[Proposition 12]{C}.
\begin{Prop}\label{propH} Assume that the coefficients $\rho$, $\sigma$ and $q$ satisfy
\eqref{crepeau7} and \eqref{crepeau8}. Given $T>0$ and $(y^0,y^1)\in
\mathcal{H}\times H^1_0(0,l)$, then
 there exists a control $u\in H^{1}(0,T)$ such that the solution
 $y\in C\([0, T ], \mathcal{H}\)\cap C^1\([0, T];H^1_0(0,l)\)$ of
 the linear
control problem \eqref{crepeau9}, satisfies
\begin{equation}\label{xx}
y(T,x)=y_t(T,x)=0,~~x\in[0, l].
\end{equation}
\end{Prop}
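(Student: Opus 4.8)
The plan is to obtain Proposition \ref{propH} as a one–level regularity upgrade of Theorem \ref{hj}, following the scheme of \cite[Proposition 12]{C}. First I would record the spectral description of the spaces involved. Exactly as in the proof of Proposition \ref{ph1}, the asymptotics \eqref{sp8} and \eqref{sp13} give $H^1_0(0,l)=\mathcal{H}_{1/4}$, and since one more spatial derivative of $\Phi_n$ carries a factor of order $\la_n^{1/4}$ by \eqref{sp12}, the $\mathcal{H}$–norm of a function with Fourier coefficients $(a_n)$ is equivalent to $\sum_{n}\la_n^{3/2}|a_n|^2$, i.e. to the $\mathcal{H}_{3/4}$–norm. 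Thus the target space $\mathcal{H}\times H^1_0(0,l)$ sits at the scale $\mathcal{H}_{3/4}\times\mathcal{H}_{1/4}$, one full unit above the scale $\mathcal{H}_{1/4}\times\mathcal{H}_{-1/4}$ of Theorem \ref{hj}.

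The analytic heart is the $H^1$–in–time observability inequality
$$\int_0^T|\partial_t y_x(t,l)|^2\,dt \asymp \|(y^0,y^1)\|_{\mathcal{H}\times H^1_0(0,l)}^2,$$
for solutions $y$ of the homogeneous system \eqref{crepeau10}. I would derive it directly from Proposition \ref{ph1}: since $\mathcal{A}$ is time–independent, $y_t$ is again a solution of \eqref{crepeau10}, and applying the observability \eqref{ob} to $y_t$, whose data lie one half–scale below those of $y$, produces exactly the displayed estimate. Alternatively one differentiates the Fourier series \eqref{sl} in time, writes $\partial_t y_x(t,l)=\sum_{n\in\Z\backslash\{0\}}d_n e^{-i\sqrt{\la_n}\,t}\Phi_n'(l)$ as in \eqref{ob1000}, and applies Haraux's inequality (Lemma \ref{In}); the gap \eqref{g1} and the simplicity of the spectrum (Theorem \ref{Lem2}) supply its hypotheses for every $T>0$, while the trace asymptotics \eqref{sp11} give $|\Phi_n'(l)|\asymp\la_n^{1/4}$, yielding $\sum_n \la_n^{3/2}|a_n|^2+\la_n^{1/2}|b_n|^2$.

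With this inequality available, I would run the Lions' HUM construction of Theorem \ref{hj} verbatim, but with the observation $\partial_t y_x(\cdot,l)$ replacing $y_x(\cdot,l)$. The gain of one time derivative in the observation translates, after an integration by parts in the variational characterization of the control, into a gain of one time derivative in $u$, so that the resulting control lies in $H^1(0,T)$ instead of merely $L^2(0,T)$; the controlled state then belongs to $C([0,T];\mathcal{H})\cap C^1([0,T];H^1_0(0,l))$ by the well–posedness estimate of Proposition \ref{b-p} shifted up by one half–scale.

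I expect the main obstacle to lie not in the observability, which is essentially automatic once Proposition \ref{ph1} is in hand, but in the regularity bookkeeping at the end $x=l$. The space $\mathcal{H}$ imposes $y(0)=y_{xx}(0)=y(l)=0$ but leaves $y_{xx}(l)$ free, whereas the natural domain $\mathcal{H}_{3/4}$ of $\mathcal{A}^{3/4}$ also forces $y_{xx}(l)=0$; reconciling the two requires decomposing the controlled state into a homogeneous part and a lifting of the nonhomogeneous boundary datum $\si(l)y_{xx}(t,l)=u(t)$, and checking that this lifting does not destroy the claimed regularity. Here the variable–coefficient structure must be handled with care: the substitution \eqref{s6} and the relations \eqref{s2t} are what guarantee that the trace $\Phi_n'(l)$ and the quantity $\mathrm{T}\Phi_n(l)$ retain the asymptotic behaviour needed to pass from the constant–coefficient template of \cite[Proposition 12]{C} to the present setting.
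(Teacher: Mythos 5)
The paper does not actually supply a proof of Proposition \ref{propH}: it only asserts that the proof is ``similar to that of \cite[Proposition 12]{C}'', where the control is produced by the moment method, i.e.\ by solving $\int_0^T u(t)e^{i\sqrt{\la_n}t}\,dt=d_n$ with a biorthogonal family of exponentials and reading the $H^1(0,T)$-regularity of $u$ off the extra decay of the coefficients $d_n$ coming from the more regular data. Your central analytic ingredient --- the shifted observability inequality --- is correct and is obtained exactly as you say: since the system is autonomous, $y_t$ solves \eqref{crepeau10} with data $(y^1,-\mathcal{A}y^0)\in\mathcal{H}_{1/4}\times\mathcal{H}_{-1/4}$ whenever $y^0\in\mathcal{H}_{3/4}$, so \eqref{ob} applied to $y_t$ gives the estimate; equivalently one reruns Lemma \ref{In} with the extra factor $\sqrt{\la_n}$, using the same spectral inputs (simplicity, the gap \eqref{g1}, and $|\Phi_n'(l)|\asymp\la_n^{1/4}$ from \eqref{sp11}).

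However, two steps of your sketch do not go through as written. First, $\mathcal{H}$ is not $\mathcal{H}_{3/4}$: the domain of $\mathcal{A}^{3/4}$ carries the additional constraint $y_{xx}(l)=0$, and for $y^0\in\mathcal{H}$ with $y^0_{xx}(l)\neq0$ an integration by parts shows $a_n=\langle y^0,\Phi_n\rangle_{L^2_\rho}\sim\la_n^{-1}\si(l)y^0_{xx}(l)\Phi_n'(l)\sim\la_n^{-3/4}$, so $\sum\la_n^{3/2}|a_n|^2$ diverges and the claimed norm equivalence fails. You flag this (``reconciling the two requires \dots a lifting'') but do not resolve it; the resolution forces the compatibility condition $u(0)=\si(l)y^0_{xx}(l)$ on the control and contributes finitely many extra moment conditions, and without it your argument does not cover the data admitted in the statement. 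Second, the HUM modification is oriented the wrong way: if you minimize a functional built on $\int_0^T|\partial_t\phi_x(t,l)|^2dt$ over adjoint data in $\mathcal{H}_{3/4}\times\mathcal{H}_{1/4}$, the Euler--Lagrange identity reads $\int_0^T\partial_t\phi_x\,\partial_t\psi_x\,dt=\langle\hbox{data},\psi\rangle$, and integrating by parts in $t$ identifies the control with essentially $-\partial_t^2\phi_x(\cdot,l)$ plus endpoint terms --- a \emph{less} regular control, not an $H^1$ one. To gain a derivative on $u$ one must instead weaken the observation norm (observe a time-primitive of $\phi_x(\cdot,l)$, with adjoint data relaxed to $\mathcal{H}_{-1/4}\times\mathcal{H}_{-3/4}$) or, as in \cite{C}, bypass HUM and solve the moment problem directly. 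So your key inequality is right, but the passage from it to an $H^1(0,T)$ control is not yet a proof.
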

\begin{Rem} \label{rem}  In view of
the fact that the linearized problem \eqref{crepeau9} is reversible
in time and exactly controllable in the space $\mathcal{H}\times
H^1_0(0,l)$. Then there exists a continuous linear map
$$\Gamma:~~\(\hat y^0_T,\hat y^1_T\)\in \mathcal{H}\times
H^1_0(0,l)\rightarrow u \in H^1(0,T)$$ such that the solution $\hat
y$ of the problem  \bean \label{n2}
 \left\{
 \begin{array}{ll}
\rho(x)\hat y_{tt}+(\sigma(x)\hat y_{xx})_{xx}-(q(x)\hat y_x)_x=0,&(t,x)\in(0,T)\times(0,l),\\
\hat y(t, 0) = \hat y_{xx}(t,0) = \hat y(t, l) = 0,~~ \si(l)\hat y_{xx}(t, l) = u(t),&t\in(0,T),\\
\hat y(0, x) = 0,~~\hat y_{t}(0, x) = 0,&x\in(0,l),\\
\end{array}
 \right.
\eean satisfies $\(\hat y(T),\hat y_t(T)\)=\( \hat y^0_T, \hat
y^1_T\)$. Furthermore there exists a constant $k>0$ such that, \bean
\label{n6}\|\Gamma\|_{H^1(0,T)}\leq k\|\( \hat y^0_T, \hat
y^1_T\)\|_{\mathcal{H}\times H^1_0(0,l)}\eean
\end{Rem} Now, let us denotes by $\psi_0$ and $\psi_1$, the following maps which are linear and continuous by
Proposition \ref{propH}:
                                         $$\psi_0:~
                                         \(\bar y^0,\bar y^1\)\in \mathcal{H}\times
H^1_0(0,l)\rightarrow \bar y \in C\([0, T ], \mathcal{H}\)\subset
L^2\(0,T; \mathcal{H}\),$$ where $\bar y$ is the solution of the
problem \bean \label{n1}
 \left\{
 \begin{array}{ll}
\rho(x)\bar y_{tt}+(\sigma(x)\bar y_{xx})_{xx}-(q(x)\bar y_x)_x=0,&(t,x)\in(0,T)\times(0,l),\\
\bar y(t, 0) = \bar y_{xx}(t,0) = \bar y(t, l) = 0,~~ \bar y_{xx}(t, l) = 0,&t\in(0,T),\\
\bar y(0, x) = \bar y^{0},~~\bar y_{t}(0, x) = \bar y^{1},&x\in(0,l),\\
\end{array}
 \right.
\eean and $$\psi_1:~
                                         u\in H^1(0,T)\rightarrow \hat y
                                          \in C\([0, T ], \mathcal{H}\)\subset
L^2\(0,T; \mathcal{H}\),$$ where $\hat y$ is the solution of the
problem \eqref{n2}. Moreover, there exist two constants $k_0,k_1>0$
such that \be \|\psi_0\|_{L^2\(0,T; \mathcal{H}\)}\leq
k_0\|\(y^0,y^1\)\|_{\mathcal{H}\times H^1_0(0,l)} ~\hbox { and }~
\|\psi_1\|_{L^2\(0,T; \mathcal{H}\)}\leq
k_1\|u\|_{H^1(0,T)}.\label{n50}\ee Let us recall also that if $f\in
L^1\(0,T; H^1(0,l)\)$, then the following problem \bean\label{n3}
 \left\{
 \begin{array}{ll}
\rho(x)\tilde y_{tt}+(\sigma(x)\tilde y_{xx})_{xx}-(q(x)\tilde y_x)_x=f,&(t,x)\in(0,T)\times(0,l),\\
\tilde y(t, 0) = \tilde y_{xx}(t,0) = \tilde y(t, l) = 0,~~ \tilde y_{xx}(t, l) = 0,&t\in(0,T),\\
\tilde y(0, x) = 0,~~\tilde y_{t}(0, x) = 0,&x\in(0,l),\\
\end{array}
 \right.
\eean
 has a unique weak solution $\(\tilde y,\tilde y_t\) \in C\([0, T ],\mathcal{H}\times H^1(0,l)\)$.
 As consequence, the linear map

                                         $$\psi_2:~
                                         f\in L^1\(0,T; H^1(0,l)\)
                                         \rightarrow \(\tilde y,\tilde y_t\) \in C\([0, T ],
                                          \mathcal{H}\times H^1(0,l)\)$$
                                          is continuous and there exists a constant $k_2>0$, such
that \bean \label{n4}\|\psi_2\|_{C\([0, T ], \mathcal{H}\times
H^1(0,l)\)}\leq k_2\|f\|_{L^1\(0,T; H^1(0,T)\)}.\eean Before we
prove Theorem \ref{nont}, we need the following result due to
\cite{C}.
\begin{Prop}\label{propc}\cite[Proposition 13]{C}
The map $$y\in L^2([0, T],\mathcal{H})\rightarrow(y^2)_{xx}\in
L^1([0, T],H^1(0,l))$$ is well-defined and continuous. Furthermore,
there exist a constant $k_3>0$ such that, \bean\label{n5}
\|(y^2)_{xx}-(z^2)_{xx}\|_{L^1([0, T],H^1(0,l))}\leq
k_3\(\|y+z\|_{L^2([0, T],\mathcal{H})}\|y-z\|_{L^2([0,
T],\mathcal{H})}\)\eean
\end{Prop}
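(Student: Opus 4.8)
The plan is to reduce the whole statement to a single bilinear estimate in the spatial variable and then integrate in time. First I would write the difference as a product: since $y^2-z^2=(y+z)(y-z)$, setting $p:=y+z$ and $q:=y-z$ gives $(y^2)_{xx}-(z^2)_{xx}=(pq)_{xx}$, and by the Leibniz rule $(pq)_{xx}=p_{xx}q+2p_xq_x+pq_{xx}$. The entire problem then amounts to controlling $\|(pq)_{xx}\|_{H^1(0,l)}$ by $\|p\|_{H^3(0,l)}\|q\|_{H^3(0,l)}$ for each fixed $t$, the space $\mathcal{H}$ carrying (a norm equivalent to) the $H^3(0,l)$-norm as a closed subspace of $H^3(0,l)$; note that the boundary conditions defining $\mathcal{H}$ play no role in this multiplication estimate, and the target $H^1(0,l)$ carries no boundary constraint.

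The core step is therefore the pointwise-in-$t$ product estimate
$$\|(pq)_{xx}\|_{H^1(0,l)}\leq C\,\|p\|_{H^3(0,l)}\,\|q\|_{H^3(0,l)}.$$
To prove it I would expand $\|(pq)_{xx}\|_{H^1}^2=\|(pq)_{xx}\|_{L^2}^2+\|(pq)_{xxx}\|_{L^2}^2$ and estimate each summand by the Leibniz rule together with the one-dimensional Sobolev embeddings $H^3(0,l)\hookrightarrow C^2([0,l])$, $H^2(0,l)\hookrightarrow C^1([0,l])$ and $H^1(0,l)\hookrightarrow C([0,l])$. The delicate terms are the top-order ones in $(pq)_{xxx}=p_{xxx}q+3p_{xx}q_x+3p_xq_{xx}+pq_{xxx}$: whenever three derivatives fall on one factor, the other factor must be placed in $L^\infty$, which is exactly what the embedding $H^3\hookrightarrow C^2\subset L^\infty$ provides, so that $\|p_{xxx}q\|_{L^2}\leq\|p\|_{H^3}\|q\|_{L^\infty}\leq C\|p\|_{H^3}\|q\|_{H^3}$ and symmetrically for $pq_{xxx}$; the intermediate terms $p_{xx}q_x$ and $p_xq_{xx}$ are products of two bounded factors and are harmless. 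Summing all bounds yields the claimed spatial estimate. Equivalently, this is just the statement that $H^3(0,l)$ is a Banach algebra in one dimension, combined with the boundedness of $\partial_x^2\colon H^3(0,l)\to H^1(0,l)$.

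Once the spatial estimate is in hand, I would integrate in $t$ and invoke the Cauchy--Schwarz inequality,
$$\int_0^T\|(pq)_{xx}(t)\|_{H^1}\,dt\leq C\int_0^T\|p(t)\|_{\mathcal{H}}\|q(t)\|_{\mathcal{H}}\,dt\leq C\,\|p\|_{L^2([0,T],\mathcal{H})}\,\|q\|_{L^2([0,T],\mathcal{H})},$$
which is precisely \eqref{n5} with $p=y+z$ and $q=y-z$; thus the factor structure (an $L^1$-in-time norm bounded by a product of two $L^2$-in-time norms) is dictated by nothing more than Cauchy--Schwarz. Well-definedness follows by taking $z\equiv0$, giving $\|(y^2)_{xx}\|_{L^1([0,T],H^1(0,l))}\leq C\|y\|_{L^2([0,T],\mathcal{H})}^2<\infty$, so the image indeed lies in $L^1([0,T],H^1(0,l))$; measurability of $t\mapsto(y^2)_{xx}(t)$ follows from that of $t\mapsto y(t)$ composed with the locally Lipschitz (hence continuous) map $\mathcal{H}\ni w\mapsto(w^2)_{xx}\in H^1(0,l)$.

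Finally, continuity of the map on $L^2([0,T],\mathcal{H})$ is an immediate consequence of \eqref{n5}: on any bounded set the factor $\|y+z\|_{L^2([0,T],\mathcal{H})}$ stays bounded, so \eqref{n5} exhibits the map as locally Lipschitz, whence continuous. I do not expect a genuine obstacle: the only point demanding care is the placement of factors in $L^\infty$ versus $L^2$ in the top-order term $(pq)_{xxx}$, which succeeds solely because we work in one space dimension. Since neither the boundary conditions of $\mathcal{H}$ nor any boundary constraint on the target enter the estimate, the argument is essentially that of \cite[Proposition 13]{C}, transcribed to the present variable-coefficient setting.
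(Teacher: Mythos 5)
Your proof is correct. The paper itself offers no argument for this proposition --- it is stated purely as a citation of \cite[Proposition 13]{C} --- so there is nothing internal to compare against; your derivation (factoring $y^2-z^2=(y+z)(y-z)$, the one-dimensional product estimate $\|(pq)_{xx}\|_{H^1}\leq C\|p\|_{H^3}\|q\|_{H^3}$ via $H^1(0,l)\hookrightarrow L^\infty(0,l)$, and Cauchy--Schwarz in time) is exactly the standard argument one expects behind Cr\'{e}peau's statement, and all the steps, including the measurability and local Lipschitz remarks, check out.
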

We are now ready to prove Theorem \ref{nont}.\\
\begin{Proof} Consider the nonlinear problem
 \eqref{crepeau6} with initial data $\(y^{0}, y^{1}\)\in{\mathcal{H}\times
H^1_0(0,l)}$ and a control $u\in H^1(0, T)$. Let $\bar y$ and
$\tilde y$ be the solutions of \eqref{n1} and \eqref{n3},
respectively, where $\(\bar y^{0},
 \bar y^{1}\)\equiv \(y^0,y^1\)$ and $f\equiv(y^2)_{xx}$. Obviously, the solution
 $y$ of the nonlinear problem
 \eqref{crepeau6}, can be written in the form $y:=\hat y+\bar y+ \tilde
 y$, where $\hat y$ is the solution \eqref{n2}.
Let $\mathfrak{F}$ be the nonlinear map : \bea\mathfrak{F}: y\in
L^2\(0,T;\mathcal{H}\)\rightarrow \mathfrak{F}(y) \in
L^2\(0,T;\mathcal{H}\) \eea such that \bea
&~&\mathfrak{F}(y)=\psi_0(y^0,y^1)-\psi_2\((y^2)_{xx}\)\\
&~&+\psi_1\circ\Gamma\Big{(}(y^0_T,y^1_T)-
\(\psi_0(y^0,y^1)(T),\psi_{0t}(y^0,y^1)(T)\)-\(\psi_2((y^2)_{xx})(T),\psi_{2t}((y^2)_{xx})(T)\)\Big{)}.
\eea To prove the theorem it suffices to show that $\mathfrak{F}$
has a fixed point. Furthermore, using \eqref{n6}, \eqref{n50},
\eqref{n4} and \eqref{n5}, it can be shown by a straightforward
computation that there exists a constant $\tilde k>0$ such that
\bean \|\mathfrak{F}(y)\|_{L^2\(0,T;\mathcal{H}\)}\leq \tilde
k\(\|y\|^2_{L^2([0, T],\mathcal{H})}+ \|\(y^{0},
y^{1}\)\|_{\mathcal{H}\times
H^1_0(0,l)}+\|\(y_T^0,y_T^1\)\|_{\mathcal{H}\times
H^1_0(0,l)}\)\label{n7}\eean  and
\be\|\mathfrak{F}(y)-\mathfrak{F}(z)\|_{L^2\(0,T;\mathcal{H}\)} \leq
\tilde k\|y-z\|_{L^2([0, T],\mathcal{H})}\(\|y\|_{L^2([0,
T],\mathcal{H})}+\|z\|_{L^2([0, T],\mathcal{H})}\).\label{n60}\ee
Let $ \(y^{0}, y^{1}\)$, $\(y_T^0,y_T^1\) \in\mathcal{H}\times
H^1_0(0,l)$ such that $$\|\(y^{0}, y^{1}\)\|_{\mathcal{H}\times
H^1_0(0,l)} < r \hbox{ and }\|\(y_T^0,y_T^1\)\|_{\mathcal{H}\times
H^1_0(0,l)} <r ,$$ where $r
> 0$ is a constant which will be fixed later. For each $R >
0$, let us denote the ball of radius $R$ and centered at the origin
by
 $$B\(0,R\) := \{y\in
L^2\(0, T ; \mathcal{H}\), \hbox{ such that } \|y\|_{L^2(0,T
,\mathcal{H})}\leq R\}.$$ Since we aim to use Banach fixed point
theorem to the restriction of $\mathfrak{F}$ to the ball
$\overline{B}\(0,R\)$, then the constants $r
> 0$ and $R> 0$ can be chosen in \eqref{n7} and \eqref{n60}, such that $\hat k(2r + R^2) \leq R$ and
$2R\hat k<1$. Let $R=\frac{1}{4\hat k}$ and $r=\frac{3R^2}{2}$, then
by \eqref{n7} and \eqref{n60}, one gets \bean
\mathfrak{F}\(B(0,R)\)&\subset&
B(0,R),\label{61}\\
\|\mathfrak{F}(y)-\mathfrak{F}(z)\|_{L^2\(0,T;\mathcal{H}\)} &\leq&
\frac{1}{2}\|y-z\|_{L^2([0, T],\mathcal{H})},\label{62}\eean which
implies by Banach fixed point theorem that $\mathfrak{F}$ has a
unique fixed point. The proof of the theorem is completed.
\end{Proof}
 \vskip 1.5cm

\vskip 3cm
\end{document}